\documentclass[10pt]{article}
\usepackage[utf8]{inputenc}
\usepackage{amsfonts,amsthm,hyperref,enumitem,amsmath}
\usepackage{graphicx, caption}
\usepackage{amssymb}
\usepackage{amsthm}
\usepackage{amsmath}
\usepackage{enumitem}
\usepackage{framed}
\usepackage{tikz}

\usepackage{soul}
\usepackage{changepage}
\usepackage{comment}
\usepackage{float}
\usepackage{hyperref}

\usepackage{mathtools}

\newtheorem{lemma}{Lemma}[section]
\newtheorem{theorem}[lemma]{Theorem}
\newtheorem{definition}[lemma]{Definition}
\newtheorem{question}[lemma]{Question}
\newtheorem{proposition}[lemma]{Proposition}

\newtheorem{claim}[lemma]{Claim}

\newcommand{\AM}[1]{\textcolor{purple}{ $\blacktriangleright$\ {\sf AM: #1}\
  $\blacktriangleleft$ }}
  
  \newcommand{\MPS}[1]{\textcolor{orange}{ $\blacktriangleright$\ {\sf MPS: #1}\
  $\blacktriangleleft$ }}
  \newcommand{\JH}[1]{\textcolor{blue}{ $\blacktriangleright$\ {\sf JH: #1}\
  $\blacktriangleleft$ }}

\title{Spanning trees in pseudorandom graphs via sorting networks}
\author{Joseph Hyde\thanks{Department of Mathematics and Statistics, University of Victoria, 3800 Finnerty Road, Victoria, BC V8P 5C2, Canada. Email: \texttt{josephhyde@uvic.ca}}  \and Natasha Morrison\thanks{Research supported by NSERC Discovery Grant RGPIN-2021-02511 and NSERC Early Career Supplement DGECR-2021-00047. Department of Mathematics and Statistics, University of Victoria, 3800 Finnerty Road, Victoria, BC V8P 5C2, Canada. Email: \texttt{nmorrison@uvic.ca}.}  \and Alp M\"uyesser\thanks{Department of Mathematics, University College London, WC1E 6BT, UK. Email: \texttt{alp.muyesser.21@ucl.ac.uk}} \and Mat\'ias Pavez-Sign\'e\thanks{Supported by ANID Basal Grant CMM FB210005 and by the European
Research Council (ERC) under the European Union Horizon 2020 research and innovation programme (grant agreement No. 947978) while the author was affiliated to the University of Warwick. Centro de Modelamiento Matem\'atico (CNRS IRL2807), Universidad de Chile, Santiago, Chile. Email: \texttt{mpavez@dim.uchile.cl}}}
\date{}
\addtolength{\textwidth}{2in}
\addtolength{\hoffset}{-1in}
\addtolength{\textheight}{1in}
\addtolength{\voffset}{-0.7in}
\setlength{\parskip}{6pt}
\setlength{\parindent}{0pt}

\begin{document}

\newcommand{\eps}{\varepsilon}
\newcommand{\N}{\mathbb{N}}

\newcommand{\out}{\text{\upshape out}}
\newcommand{\inn}{\text{\upshape in}}

\maketitle
\begin{abstract}
    We show that $(n,d,\lambda)$-graphs with $\lambda=O(d/\log^3 n)$ are universal with respect to all bounded degree spanning trees. 
    This significantly improves upon the previous best bound due to Han and Yang of the form $\lambda=d/\exp{(O(\sqrt{\log n}))}$, 
    and makes progress towards a problem of Alon, Krivelevich, and Sudakov from 2007. 
    \par Our proof relies on the existence of sorting networks of logarithmic depth, 
    as given by a celebrated construction of Ajtai, Koml\'os and Szemer\'edi.
    Using this construction, 
    we show that the classical vertex-disjoint paths problem can be solved for a set of vertices fixed in advance. 
\end{abstract}

\section{Introduction}

How pseudorandom does a graph need to be before it contains a certain spanning subgraph? This problem is not very well understood, especially in comparison to its purely random analogue concerning  ``thresholds'' in $\mathbb{G}(n,p)$ (see, e.g.~\cite{frankston2021thresholds}). For example, the best possible condition that forces an $(n,d,\lambda)$-graph to contain a Hamilton cycle is not known, where an $(n,d,\lambda)$-graph is an $n$-vertex $d$-regular graph such that the second largest eigenvalue (in absolute value) of the adjacency matrix is bounded above by $\lambda$. 
The well-known expander mixing lemma states that the smaller $\lambda$ is, the more pseudorandom an $(n,d,\lambda)$-graph becomes, 
in the sense that the edges are more evenly distributed across the graph (we refer the reader to \cite{krivelevich2006pseudo} for a thorough exposition). 
Intuitively, the more pseudorandom a graph is, 
the easier it becomes to find a target subgraph. However, finding optimal conditions that force $(n,d,\lambda)$-graphs to contain certain spanning subgraphs is notoriously difficult as, for instance, the only cases that are fully understood are perfect matchings \cite{krivelevich2006pseudo} and triangle-factors \cite{morrisfactors}.
\par Our main motivation in this paper is the following problem of Alon, Krivelevich, and Sudakov~\cite{alon2007embedding} from 2007. 
\begin{question}\label{mainquestion}
    Is it true that for any $\Delta\in \mathbb{N}$, there exists a constant $C=C(\Delta)$ such that any $(n,d,\lambda)$-graph with $\lambda \leq d/C$ contains \textit{every} tree on $n$ vertices with maximum degree at most $\Delta$?  
\end{question}
\par At the moment, a positive answer to this question seems to be out of reach. Indeed, the best known bound on $\lambda$ that guarantees a Hamilton path is $\lambda \leq d/O(\log^{1/3} n)$ due to a recent result of Glock, Munh{\'a} Correia,
and Sudakov \cite{glock2023hamilton} (see also \cite{KrivelevichHamilton}). In the regime of Question~\ref{mainquestion} (when $\lambda \simeq o(d)$), we only have results for restricted classes of trees. Indeed, a recent result of the fourth author~\cite{pavezsigne2023spanning} shows that if $G$ is an $(n,d,\lambda)$-graph with $\lambda\le d/C$ for some large constant $C$, then $G$ contains all $n$-vertex bounded degree trees with linearly many leaves. For general bounded degree trees, Han and Yang~\cite{han2022spanning} made some progress towards a full answer to Question~\ref{mainquestion} by showing that the condition $\lambda \leq d/ \Delta^{O(\sqrt{\log n})}$ is sufficient. 
From now on, we say that a graph is $\mathcal T(n,\Delta)$-\textit{universal} if it contains every $n$-vertex tree with maximum degree at most $\Delta$. 
\par In this paper, we make another step towards a positive answer to Question~\ref{mainquestion}. 
\begin{theorem}\label{thm:mainthm} For all $\Delta\in\mathbb N$, there exists a positive constant $C$ such that the following holds. Every $(n,d,\lambda)$-graph with $\lambda\leq d/C \log^{3} n$ is $\mathcal T(n,\Delta)$-universal.
\end{theorem}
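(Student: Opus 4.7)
The plan is a two-stage embedding of the target tree $T$. First, we use a standard dichotomy: every $n$-vertex tree with maximum degree at most $\Delta$ either contains $\Omega(n)$ leaves, or contains $\Omega(n/\ell)$ vertex-disjoint bare paths of length $\ell$ for any $\ell=O(\log n)$. The first case is already handled by Pavez-Sign\'e~\cite{pavezsigne2023spanning}, so we may assume $T$ contains a collection of $k=\Omega(n/\ell)$ vertex-disjoint bare paths of length $\ell=\Theta(\log n)$, whose removal (except for endpoints) leaves a small tree $T_0$ of order $o(n)$.

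In the first stage, we set aside a reservoir $R\subseteq V(G)$ that inherits the pseudorandomness of $G$ and such that $|V(G)\setminus R|$ is only slightly larger than $|T_0|$. We embed $T_0$ into $V(G)\setminus R$ using a standard greedy embedding for pseudorandom graphs (a Friedman--Pippenger-type argument suffices here because $T_0$ is small). This fixes $k$ ordered portal pairs $(u_i,v_i)$ lying outside $R$, and reduces the task to finding $k$ internally vertex-disjoint $u_i$--$v_i$ paths of length $\ell$ whose interiors partition $R$.

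The second stage, which is the heart of the paper, solves this spanning vertex-disjoint paths problem with a prescribed pairing. Classical expander arguments can connect $k$ arbitrary pairs by short vertex-disjoint paths, but not under the extra constraint that the paths cover a prescribed vertex set and respect a prescribed matching of endpoints. Our strategy is to first find vertex-disjoint paths realising a \emph{random} pairing of portals to ``targets'' inside $R$, which is straightforward via expander sampling, and then correct this random pairing to the required one by invoking the AKS sorting network. Each of the $O(\log n)$ layers of the network consists of a perfect matching of comparators, and each comparator is simulated by a short swap gadget constructed in the pseudorandom graph. After $O(\log n)$ rounds the random pairing has been transformed into the prescribed one, and the resulting paths have length $\ell$ up to an additive error of $O(\log n)$ that we absorb by the choice of $\ell=\Theta(\log n)$.

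The main obstacle will be the simulation of the sorting network layers. The residual graph after routing one layer must retain enough pseudorandomness to allow the next layer to be implemented; since each routing round costs roughly a factor of $\log n$ in the effective ratio $\lambda/d$, and a further $\log n$ is consumed by the reservoir setup and the embedding of $T_0$, this chain of losses is precisely what produces the $\log^3 n$ factor in the hypothesis. The delicate technical content lies in controlling these losses round by round, while also ensuring that the small discrepancies between the reservoir and the union of path interiors can be absorbed at the last few layers without destroying the pseudorandom invariants needed for the disjoint paths.
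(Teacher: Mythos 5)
Your high-level strategy matches the paper's -- split into trees with many leaves versus trees with many bare paths, embed the tree minus the bare-path interiors, and route a sorting network to resolve the prescribed vertex-disjoint paths problem -- but your path-length accounting is wrong, and this error propagates through your whole plan. A comparator cannot be realised here by a gadget of order $O(\log n)$: to be embeddable via the rollback technique the gadget must be path-constructible with paths of length $\Omega(\log n)$, and to realise both the crossing and non-crossing routings with equal-length paths that \emph{partition} the gadget's vertex set, the construction (Lemma~\ref{lemma:gadgets_exist_more_abstract}) produces a gadget $G_k$ of order $\Theta(k^2)$ whose internal paths have length $\Theta(k^2)$ with $k=\Theta(\log n)$. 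Chaining $\Theta(\log n)$ layers of such gadgets yields paths of length $\Theta(\log^3 n)$, not $\Theta(\log n)$; you cannot ``absorb'' an additive error of the same order as $\ell$ itself. This $\Theta(\log^3 n)$ is the actual source of the $\log^3 n$ in the hypothesis -- the reservoir sets have size $\Theta(n/\log^3 n)$ and expansion within sets of that size requires $\lambda \lesssim d/\log^3 n$ -- and not, as you claim, an accumulation of per-round $\log n$ losses in the ratio $\lambda/d$.

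Because the bare paths must therefore have length $\Theta(\log^3 n)$, the forest $T'$ remaining after removing their interiors has order $(1-\mu)n$ for a small constant $\mu$, not $o(n)$, and the pre-built sorting network $S_{res}$ can only occupy a small constant fraction of the host graph. The bulk of each bare path is realised not by the sorting network but by a chain of $\Theta(\log^3 n)$ sequential perfect matchings between sets of size $\Theta(n/\log^3 n)$, which simultaneously absorbs all leftover vertices (the spanning requirement) and carries the portals to the two sides of $S_{res}$; only the last $\Theta(\log^3 n)$ edges of each path pass through $S_{res}$ to realise the prescribed pairing. Finally, the quantifier order matters: $S_{res}$ is constructed \emph{before} $T'$ is embedded and before any pairing is known, and the content of Lemma~\ref{lemma:find_sorting_network_in_expander} is that this single subgraph admits a $P_\ell$-factor realising \emph{every} bijection $\phi:V_1\to V_2$. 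The layer-by-layer on-the-fly routing your description suggests would have these quantifiers reversed, which is precisely the weakness of the earlier result of Dragani\'c, Krivelevich and Nenadov that the paper identifies and overcomes.
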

Note that, up to polylogarithmic terms, our bound matches the best known bound on $\lambda$ for guaranteeing the existence of Hamilton paths. 
Furthermore, as with the result of Han and Yang, this result confirms the existence of $\mathcal T(n,\Delta)$-universal graphs with arbitrarily large girth,
thereby addressing a problem of Johannsen, Krivelevich, and Samotij \cite{johannsen2013expanders} (we refer the reader to \cite{han2022spanning} for further details). 

\par When proving a statement such as Theorem~\ref{thm:mainthm}, the usual strategy is to combine methods for embedding an almost-spanning tree with methods for turning an almost spanning tree into a spanning tree. For example, if the tree has many leaves, one can embed all of the tree except the leaves in some convenient way, and then use Hall's theorem in order to find a matching between the image of the parents of leaves and the leftover vertices in the host graph to complete the embedding. When the tree has few leaves, one typically uses instead its (necessarily existing) path-like substructures to finish the embedding. To achieve this, one needs to find a way to connect a given collection of $\Theta(\frac{\lambda n}{d})$ pairs of vertices with vertex-disjoint paths of length $O(\frac{d}{\lambda})$, while partitioning some target set in the process. In his breakthrough work establishing the threshold for the random graph $\mathbb{G}(n,p)$ to be $\mathcal T(n,\Delta)$-universal, Montgomery~\cite{montgomery2019spanning} developed what became known as the \textit{distributive absorption method} in order to solve a suitable version of this vertex-disjoint paths problem. 
Since then, this method has found numerous applications for embedding spanning subgraphs (see, e.g. \cite{montgomery2021proof, morrisfactors}) and, notably, Han and Yang \cite{han2022spanning} rely on this method to prove their aforementioned result.

\par Our proof, on the other hand, does not rely on any absorption technique. Instead, we take a new approach to address this problem in sparse pseudorandom graphs, by carefully constructing a special graph which is based on the existence of optimal sorting networks (as given by the celebrated result by Ajtai, Koml\'os, and Szemer\'edi~\cite{ajtai19830}) in combination with the rollback method to embed trees into sparse expanders (which is based on an idea of Johannsen \cite{draganic2022rolling}). This allows us to show that the vertex-disjoint paths problem (see \cite{draganic2022rolling}) in sparse expanders can be solved with a set of vertices \textit{fixed in advance} (see Lemma~\ref{lemma:find_sorting_network_in_expander}). We believe that this construction might be of independent interest, and we include another application in Section~\ref{sec:concluding} showing that sparse pseudorandom graphs can be factorised\footnote{Given graphs $F$ and $G$, an $F$-\textit{factor} in $G$ is a collection of vertex-disjoint copies of $F$ covering every vertex in $G$.} into cycles of polylogarithmic length. 

\section{Proof overview}\label{proof_overview}

Our starting point is the above mentioned result due to the fourth author, which states that if $\lambda\leq d/C$, for some large constant $C$, then an $(n,d,\lambda)$-graph contains every bounded degree tree with $\Omega(\frac{\lambda n}{d})$ leaves.
\begin{theorem}[{\cite[Theorem~4.2]{pavezsigne2023spanning}}]\label{thm:manyleaves}
For all $\Delta\in\mathbb N$, there exist positive constants $C$ and $K$ such that the following holds for all sufficiently large $d\in\mathbb N$.
If $G$ is an $(n,d,\lambda)$-graph with $\lambda\leq d/C$,
then $G$ contains a copy of every $n$-vertex tree with maximum degree at most $\Delta$ and at least $\frac{K\lambda n}{d}$ leaves. 
\end{theorem}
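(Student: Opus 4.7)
The plan is to separate the $\ell := |L| \geq K\lambda n/d$ leaves $L$ of $T$ from the rest of the tree, embed the non-leaf part first, and then attach the leaves via a Hall-matching argument. Write $T^\circ := T - L$ for the $(n-\ell)$-vertex subtree of maximum degree at most $\Delta$, and let $P$ be the multiset recording, for each leaf in $L$, the vertex of $T^\circ$ to which it is adjacent in $T$.

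First I would reserve a uniformly random subset $B \subseteq V(G)$ with $|B| = \ell$ and set $A := V(G) \setminus B$. Standard Chernoff concentration for hypergeometric random variables, combined with the expander mixing lemma, shows that with positive probability: (i) $|N_G(v) \cap B| = (1 \pm o(1))\, d\ell/n$ for every $v \in V(G)$, and (ii) $G[A]$ still inherits the Friedman--Pippenger-type expansion one needs to embed bounded degree trees greedily (local neighbourhoods and small-set expansion are preserved up to lower-order terms). Fix such a $B$.

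The main step is to embed $T^\circ$ as a spanning subgraph of $G[A]$ in a manner that makes Hall's condition hold in the bipartite graph $H$ between $\phi(P)$ and $B$ whose edges are the $G$-edges. I would do this by combining the Friedman--Pippenger greedy tree-extension framework with the rollback idea used, e.g., in \cite{draganic2022rolling}: one extends the embedding vertex by vertex while maintaining that every embedded subset has many external neighbours in the remaining portion of $A$, and when the greedy extension fails near the end of the embedding, one locally un-embeds a small piece of $T^\circ$ and re-routes it to unblock. To secure Hall's condition, I would randomize the order of embedding (or first pre-embed a randomly chosen subtree) so that $\phi(P)$ is quasi-uniformly distributed in $A$. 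Then for any $S \subseteq \phi(P)$, the expander mixing lemma yields $|N_G(S) \cap B| \geq |S|\cdot d\ell/n - \lambda \sqrt{|S|\ell}$; since $\ell \geq K\lambda n/d$, taking $K$ sufficiently large in terms of $C$ and $\Delta$ makes this at least $|S|$, and Hall's theorem produces a perfect matching extending $\phi$ to the leaves of $T$.

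The principal obstacle is the spanning step itself: Friedman--Pippenger alone only yields $(1-o(1))n$-vertex tree embeddings in sparse expanders, and covering the last $o(n)$ vertices of $A$ (while ensuring all of $\phi(P)$ ends up well-positioned with respect to $B$) forces one to invoke the rollback technique and to simultaneously track the pseudo-random distribution of $\phi(P)$. Balancing these two constraints — spanning the last vertices while keeping $\phi(P)$ well-distributed — is the key technical point and determines the dependence of $C$ and $K$ on $\Delta$.
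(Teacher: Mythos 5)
First, note that the paper does not prove this statement at all: Theorem~\ref{thm:manyleaves} is imported as a black box from the fourth author's earlier work \cite{pavezsigne2023spanning}, so there is no in-paper proof to compare against. Your high-level skeleton (strip the leaves, embed $T-L$, finish with a Hall matching into a reserved random set $B$) is exactly the standard strategy the paper itself sketches in its introduction, so the architecture is right; the problems are in the two steps you defer.

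The first genuine gap is the instruction to ``embed $T^\circ$ as a spanning subgraph of $G[A]$.'' Since $|T^\circ|=|A|=n-\ell$, this is itself a spanning bounded-degree tree embedding problem in an $(n,d,\lambda)$-graph --- precisely the problem the theorem is trying to solve --- and neither Friedman--Pippenger/Haxell nor the rollback idea delivers it: those techniques leave $\Theta(m)=\Theta(\lambda n/d)$ vertices of the host uncovered (cf.\ the slack $(2D+3)m$ in Lemma~\ref{lemma:extendable:embedding}), and rollback is a device for maintaining extendability while \emph{removing} leaves so as to connect prescribed endpoints, not for covering leftover vertices. The correct move is to embed $T^\circ$ only almost-spanningly, avoiding $B$, and then \emph{absorb the leftover vertices of $A$ into $B$} before running Hall; this is why the hypothesis $\ell\geq K\lambda n/d$ has exactly the right order --- the leaves must be numerous enough to swallow the $O(\lambda n/d)$ unavoidable leftover. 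The second gap is that you verify Hall's condition only from the $\phi(P)$ side. To obtain a copy of $T$ that is \emph{spanning}, the matching must saturate all of $B$ together with the leftover, and the deficiency argument for large sets (as in the proof of Lemma~\ref{lemma:matching}) then requires every vertex of $B\cup(\text{leftover})$ to have many neighbours in $\phi(P)$. Since $\phi(P)$ is determined by the embedding and is not random, and since the leftover vertices are precisely the ones the greedy embedding failed to use, this reverse degree condition is the real crux and is not supplied by your randomisation of $B$ or of the embedding order. Two smaller points: the expander mixing lemma bounds $e(S,B)$, not $|N_G(S)\cap B|$, so the displayed inequality needs the standard ``suppose $|N_G(S)\cap B|<|S|$ and bound $e(S,N_G(S)\cap B)$ from both sides'' argument; and because $P$ is a multiset with multiplicities up to $\Delta$, Hall's condition for the leaf-slots requires expansion of parent-sets by a factor $\Delta$, not $1$.
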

For a graph $H$, say that a path $P\subset H$ is a \textit{bare path} if all vertices in $P$ have degree $2$ in $H$. The following well-known result of Krivelevich \cite{Krivelevichtrees} implies that trees with few leaves contain many bare paths.
\begin{lemma}\label{lemma:fewleaves}Let $n,k,\ell\in\mathbb N$ and let $T$ be an $n$-vertex tree with at most $\ell$ leaves.
Then $T$ contains a collection of at least $\frac{n}{k+1}-(2\ell-2)$ vertex-disjoint bare paths, each of length $k$. 
\end{lemma}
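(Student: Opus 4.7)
The plan is to reduce the problem to a counting exercise on the maximal bare paths of $T$. First I would identify the set $V_2 \subseteq V(T)$ of vertices of degree exactly $2$. Using the standard tree identity $\sum_{v \in V(T)}(\deg v - 2) = -2$, together with the hypothesis that $T$ has at most $\ell$ leaves, one concludes that at most $\ell - 2$ vertices have degree at least $3$, and hence $|V_2| \geq n - 2\ell + 2$.

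Next, I would analyze the structure induced on $V_2$. The subgraph $T[V_2]$ has maximum degree $2$ and is acyclic, so it decomposes into some number $s$ of vertex-disjoint paths $P_1, \dots, P_s$, each of which is a maximal bare path of $T$. To control $s$, I would contract each such path (together with its two boundary edges in $T$) into a single edge joining the two neighbouring vertices of $V(T) \setminus V_2$. The result is a tree on $n - |V_2| \leq 2\ell - 2$ vertices, which has at most $2\ell - 3$ edges, and hence $s \leq 2\ell - 3$.

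From each $P_i$ with $v_i$ vertices one can then carve out $\lfloor v_i/(k+1)\rfloor$ vertex-disjoint sub-paths of length $k$, each of which is itself a bare path of $T$ (all of its vertices still have degree $2$ in $T$). Summing over $i$ and using $\lfloor v_i/(k+1)\rfloor \geq (v_i - k)/(k+1)$, together with $\sum_i v_i = |V_2| \geq n - 2\ell + 2$ and $s \leq 2\ell - 3$, the total number of extracted bare paths is at least $(n - 2\ell + 2 - (2\ell - 3)k)/(k+1)$, which a short arithmetic simplification shows to be at least $\tfrac{n}{k+1} - (2\ell - 2)$.

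The argument is essentially bookkeeping, and I do not anticipate a genuine obstacle. The only step that requires a moment of care is obtaining the sharp bound $s \leq 2\ell - 3$ via the contraction described above; a cruder estimate, such as one relying only on the number of branching vertices, would give something weaker like $O(\ell)$ and would not be tight enough to recover the constant $2\ell - 2$ claimed in the lemma.
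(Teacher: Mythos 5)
The paper itself does not prove Lemma~\ref{lemma:fewleaves}; it is quoted as a known result of Krivelevich with a citation, so there is no in-paper argument to compare against. Your proof is correct and is essentially the standard argument for this fact. The degree-sum identity gives $\sum_{\deg v\ge 3}(\deg v-2)=L-2\le\ell-2$ (where $L$ is the number of leaves), so there are at most $\ell-2$ branching vertices, hence $|V_2|\ge n-2\ell+2$; the suppression/contraction of the components of $T[V_2]$ yields a tree on $V(T)\setminus V_2$, which has $n-|V_2|-1\le 2\ell-3$ edges, and since each component of $T[V_2]$ contributes one of these edges, $s\le 2\ell-3$; and carving $\lfloor v_i/(k+1)\rfloor$ length-$k$ subpaths from the $i$-th maximal bare path, then summing and simplifying using $\lfloor x\rfloor\ge x-\frac{k}{k+1}$, gives exactly $\tfrac{n}{k+1}-(2\ell-2)$ after the cancellation you observe (the difference reduces to $k\ge 0$). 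The only point to be pedantic about is that the bookkeeping implicitly assumes $n\ge 2$ so that $L\ge 2$; for $n\le 1$ the statement is degenerate and can be dispatched trivially.
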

Then, combining Theorem~\ref{thm:manyleaves} and Lemma~\ref{lemma:fewleaves},
we see that when proving Theorem~\ref{thm:mainthm} we need only concern ourselves with trees $T$ containing $\Omega(n/\log^3 n)$ bare paths, each of length $O(\log^3 n)$ (see the proof of Theorem~\ref{thm:mainthm} for the formal calculation). 
Once the internal vertices of the bare paths are removed from $T$, the resulting forest, say $F$, is not spanning anymore, in which case there are well-known results (see~\cite{FP1987,H2001}) that allow us to find, in an expander graph, a copy of every almost spanning forest with maximum degree at most $\Delta$. To complete the embedding of $T$, we thus only need to find a collection of vertex-disjoint paths of length $O(\log^3n)$ joining the image of the endpoints of the bare paths we just removed from $T$, and moreover, while doing so, using all the leftover vertices in $G$. What we would like to do is sequentially find  perfect matchings between $O(\log^3 n)$ consecutive pairs of sets, each of size $\Theta(n/\log^3 n)$ (setting aside some random sets in the beginning and using Lemma~\ref{lemma:matching} can achieve this, for example). 
The union of these matchings would then form a path-factor which corresponds to the desired collection of bare paths.
The problem with this argument is that the bare paths we need to embed have designated endpoints, since we have already committed to an embedding of the tree with the internal vertices of the bare paths removed. That is, how can we be sure in the very small amount of our $(n,d,\lambda)$-graph we have at the end of our embedding process that the exact path-factor corresponding to the remaining bare paths exists?
The next lemma is designed to handle this complication (we refer the reader to Section~\ref{prelims} for the definitions of $m$-joined, $(D,m)$-extendable and $I(X)$). 

\begin{lemma}\label{lemma:find_sorting_network_in_expander} There is an absolute constant $C_{\ref{lemma:find_sorting_network_in_expander}}$ with the following property. Let $1/n\ll 1/K\ll 1/C_{\ref{lemma:find_sorting_network_in_expander}}$, and let $D,m\in \mathbb{N}$ satisfy $m\leq n/100D$ and $D\ge 100$. Let $G$ be an $m$-joined graph on $n$ vertices which contains disjoint subsets $V_1, V_2\subseteq V(G)$ with $|V_1|=|V_2|\leq n/K\log^{3}n$, and set $\ell:=\lfloor C_{\ref{lemma:find_sorting_network_in_expander}} \log^3 n \rfloor$.
Suppose that $I(V_1\cup V_2)$ is $(D,m)$-extendable in $G$.
\par Then, there exists a $(D,m)$-extendable subgraph $S_{res}\subseteq G$ such that for any bijection $\phi\colon V_1\to V_2$, there exists a $P_\ell$-factor of $S_{res}$ where each copy of $P_\ell$ has as its endpoints some $v\in V_1$ and $\phi(v)\in V_2$.
\end{lemma}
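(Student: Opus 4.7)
The plan is to build an abstract template graph $H$ on $N\ell$ vertices (where $N := |V_1|$) encoding the Ajtai--Koml{\'o}s--Szemer{\'e}di sorting network of depth $d = O(\log N) = O(\log n)$. Label $V_1 = \{u_1, \ldots, u_N\}$ and $V_2 = \{w_1, \ldots, w_N\}$ arbitrarily. The template consists of $N$ \emph{wires}: wire $i$ is a path of length $\ell$ from $u_i$ to $w_i$, subdivided into $d+1$ \emph{segments} of length $L = \Theta(\log^2 n)$, separated by $d$ \emph{switch gadgets}. The $j$-th switch gadget for the comparator on wires $(i, i')$ at level $j$ of the sorting network consists of four new vertices $a^j_i, a^j_{i'}, b^j_i, b^j_{i'}$ forming a $K_{2,2}$, where $a^j_i$ (resp.\ $a^j_{i'}$) is attached to the end of segment $j-1$ on wire $i$ (resp.\ $i'$) and $b^j_i$ (resp.\ $b^j_{i'}$) is attached to the start of segment $j$ on wire $i$ (resp.\ $i'$). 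A \emph{pass} uses edges $a^j_i b^j_i$ and $a^j_{i'} b^j_{i'}$; a \emph{swap} uses $a^j_i b^j_{i'}$ and $a^j_{i'} b^j_i$; in either case all four vertices of the gadget are covered. By the routing property of sorting networks, for any permutation $\sigma$ of $\{1, \ldots, N\}$ there is a pass/swap assignment at each gadget giving a $P_\ell$-factor of $H$ where the path starting at $u_i$ ends at $w_{\sigma(i)}$.

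\textbf{Embedding into $G$.} I would then embed $H$ as a subgraph $S_{res} \subseteq G$ with the wire endpoints occupying their prescribed positions in $V_1 \cup V_2$. Since $H$ has maximum degree $3$, $|V(H)| = N \ell \leq n/K$, and $I(V_1 \cup V_2)$ is already $(D,m)$-extendable in $G$, this is amenable to extendability-based embedding methods (e.g.\ those underlying \cite{draganic2022rolling}). The plan is a two-pass strategy: first, use the $m$-joinedness of $G$ to locate vertex-disjoint copies of $K_{2,2}$ in $G$ for all $dN/2 = O(N \log n)$ switch gadgets; second, join consecutive switch gadgets on each wire, and the switch gadgets at the two ends to the prescribed vertices of $V_1 \cup V_2$, by finding vertex-disjoint paths of length $L$ in $G$ between the two prescribed endpoints, via a path-extension lemma for $(D,m)$-extendable host graphs. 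The $(D,m)$-extendability property is preserved step by step. Define $S_{res}$ as the union of all selected switch gadgets and wire segments; by construction $S_{res} \cong H$.

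\textbf{Conclusion and main obstacle.} For any bijection $\phi\colon V_1 \to V_2$, let $\sigma$ be the permutation defined by $\phi(u_i) = w_{\sigma(i)}$; the routing property of the first paragraph then yields a $P_\ell$-factor of $S_{res}$ realising $\phi$. The hard part is the embedding step, where the switch-gadget structure demands the simultaneous placement of four vertices with prescribed adjacencies into $G$, rather than a one-vertex-at-a-time greedy extension. This requires a path-extension lemma in which both endpoints of each short path are fixed in advance, which must be executed carefully so as to preserve $(D,m)$-extendability throughout the $O(N\log n)$-many path constructions. The crucial saving comes from the $O(\log n)$ depth of the AKS sorting network: it is this logarithmic depth that allows us to keep $\ell = O(\log^3 n)$, matching the pseudorandomness threshold $\lambda \leq d / C\log^3 n$ of Theorem~\ref{thm:mainthm}; any routing network of larger depth would push $\ell$ past this threshold.
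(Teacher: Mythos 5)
Your high-level plan matches the paper's: encode an AKS sorting network of depth $O(\log n)$ as a bounded-degree template graph, embed it into $G$ via extendability methods so that $A,B$ land on $V_1,V_2$, and then route any permutation $\phi$ through the network. The gap is the choice of comparison gadget. You propose a $K_{2,2}$, i.e.\ a $4$-cycle, and appeal to $m$-joinedness to locate vertex-disjoint copies. But $m$-joinedness says nothing about short cycles; an $m$-joined graph can have arbitrarily large girth (indeed, one stated goal of the paper, following Johannsen--Krivelevich--Samotij, is to produce $\mathcal T(n,\Delta)$-universal graphs of large girth), so $G$ may contain no $K_{2,2}$ whatsoever. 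Moreover, the only available embedding tools — Lemma~\ref{lemma:extendable:embedding} for trees and the rollback connection Lemma~\ref{lemma:connecting} — can only place a leaf at a time or connect two already-embedded vertices by a path of length $\geq 2\lceil \log(2m)/\log(D-1)\rceil + 1 = \Omega(\log n/\log D)$; neither can manufacture a $C_4$, nor any path of length $O(1)$ between two prescribed endpoints. The obvious repair of subdividing each $K_{2,2}$-edge into a path of length $\Theta(\log n)$ fails for a different reason: a pass (or swap) then routes through only two of the four subdivided edges, leaving the subdivision vertices on the other two uncovered, so you lose the $P_\ell$-\emph{factor} property.

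This is precisely the obstacle the paper's gadget $G_k$ (Lemma~\ref{lemma:gadgets_exist_more_abstract}) is engineered to overcome. It has $2k(k-1)$ vertices with $k=\Theta(\log n)$, maximum degree $3$, girth $\Theta(k)$, and admits \emph{two} path-pair decompositions $(P_1,Q_1)$ and $(P_2,Q_2)$ — one for ``swap'', one for ``pass'' — each of which \emph{partitions} $V(G_k)$ into two equal-length paths. Crucially, $G_k$ is $\{z\}$-path-constructible (Definition~\ref{Def_constructible}) with constituent paths of length between $k$ and $2k+1$, which is exactly the interface Lemmas~\ref{lemma:extendable:embedding} and~\ref{lemma:connecting} require: the template $G'$ of Proposition~\ref{prop:sorting_network_template} is then built up one long path at a time while preserving $(D,m)$-extendability. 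Your closing paragraph acknowledges that the simultaneous four-vertex placement is ``the hard part'' and would need short paths with both endpoints fixed, but you stop short of the key realisation: no such lemma can exist in this sparse $m$-joined setting, and the gadget itself must be redesigned to have large girth and the two equal-length path-partition routings. That redesign is the technical core of the lemma.
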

\par We remark that Lemma~\ref{lemma:find_sorting_network_in_expander} can be used in $(n,d,\lambda)$ graphs with $\lambda/d=O(1)$, and thus might have applications beyond those which we consider in the current paper. We will return to this aspect in the concluding remarks.
\par To prove Theorem~\ref{thm:mainthm}, we proceed (roughly) as follows. We first use Lemma~\ref{lemma:find_sorting_network_in_expander}, where $V_1$ and $V_2$ are two random subsets of size $\Theta(n/\log^3 n)$, getting a subgraph $S_{res}$ which we will use at the end of the proof in order to complete the embedding of $T$. We then find a copy of the forest $F$ (using Lemma~\ref{lemma:extendable:embedding}) in $G\setminus V(S_{res})$. The next step is to find a collection of consecutive matchings between $O(\log^3n)$ sets of size $\Theta(n/\log^3n)$, starting from the image of the endpoints of the bare paths and ending at $V_1$ and $V_2$, respectively.  We achieve this by first picking pairwise disjoint random subsets $V'_1,\ldots, V'_t$, with $t=O(\log^3 n)$ and  $|V'_i|=\Theta(n/\log^3n)$ for all $i\in [t]$, so that every vertex in the graph has many neighbours into each of those sets. Then, after $F$ is embedded, we distribute all the leftover vertices into $\bigcup V'_i$ so that each $V_i'$ has the same size. Using that every vertex has good degree into each $V'_i$, combined with the expansion properties of $(n,d,\lambda)$-graphs, we can find a perfect matching between $V'_i$ and $V'_{i+1}$ for each $1\le i<t$ (see Lemma~\ref{lemma:matching}). The union of these matchings will then give us a collection of paths connecting, in some order, the image of the endpoints of the bare paths we have previously removed with $V_1$ and $V_2$. Finally, we use the property of $S_{res}$ to partition $V(S_{res})$ into paths of the same length, connecting the vertices of $V_1$ and $V_2$ in whatever order we need to finish the embedding of $T$.

Let us emphasise that the strength of Lemma~\ref{lemma:find_sorting_network_in_expander} comes from the fact that $S_{res}$ is fixed with respect to $V_1$ and $V_2$ only, and does not depend on the choice of the bijection $\phi:V_1\to V_2$.
It is worth comparing Lemma~\ref{lemma:find_sorting_network_in_expander} with \cite[Theorem 2]{draganic2022rolling} due to Dragani\'c, Krivelevich and Nenadov, which has a similar statement, 
but the order of $S_{res}$ and $\phi$ is reversed in the quantification, making their statement weaker. 
On the other hand, the result from \cite{draganic2022rolling} is quantitatively stronger than Lemma~\ref{lemma:find_sorting_network_in_expander}, in the sense that it works for sets $V_1$ and $V_2$ of size as large as $\Theta(n/\log n)$ (which is optimal as typically we cannot expect to find paths of length much shorter than $\log n$ in sparse expanders).
Getting similar quantitative improvements for Lemma~\ref{lemma:find_sorting_network_in_expander} would be interesting, as it would immediately translate to improvements in the bounds we are obtaining for Theorem~\ref{thm:mainthm}. The $\log^3 n$  factor in our bound comes from $S_{res}$ being constructed from a sorting network of depth $O(\log n)$ where the comparison gadgets are replaced with subgraphs of size $O(\log^2 n)$ coming from Lemma~\ref{lemma:gadgets_exist_more_abstract}. 

\par Pushing our methods further to show that Theorem~\ref{thm:mainthm} holds under the weaker hypothesis that $\lambda \leq d/\log^{1+o(1)}n$ seems challenging, but possible. Going beyond this, on the other hand, would possibly require entirely new ideas. 


\section{Preliminaries}\label{prelims}
\subsection{Notation}\label{notation}
For a graph $G$, let $V(G)$ and $E(G)$ denote the vertex set and edge set of $G$, respectively, and write $|G|:=|V(G)|$. For a vertex $v\in V(G)$, let $N(v)$ denote the neighbourhood of $v$ and, for a subset $U\subset V(G)$, let $N(v,U):=N(v)\cap U$. We let $d(v):=|N(v)|$ denote the degree of $v$ and write $d(v,U):=|N(v,U)|$ for the degree of $v$ into a subset $U\subset V(G)$. The minimum degree of $G$ is denoted by $\delta(G)$, and we write $\Delta(G)$ for the maximum degree of $G$. For $U\subset V(G)$, let $\Gamma(U):=\bigcup_{u\in U}N(u)$ denote the {\it neighbourhood of $U$} and let $N(U):=\Gamma(U)\setminus U$ be the {\it external neighbourhood of $U$}. If necessary, we will add subscripts to denote which graph we are working with. For a subset $S\subseteq V(G)$, let $G[S]$ denote the graph induced by $S$ and, given a subset $S'\subset V(G)\setminus S$, let $G[S,S']$ denote the bipartite graph with bipartition $S\cup S'$ and edges of the form $ss'\in E(G)$ with $s\in S$ and $s'\in S'$, further letting $e(S,S')$ denote the number of edges in $G[S,S']$. We write $I(S)$ for the edgeless subgraph with vertex set $S$, and $G-S:=G[V(G)\setminus S]$. 

A path $P$ in $G$ is a sequence of distinct vertices $P=v_1\ldots v_t$ such that $v_{i}v_{i+1}\in E(G)$ for each $1\le i<t$, in which case we say that $v_1$ and $v_t$ are the endpoints of $P$ and $v_2,\ldots, v_{t-1}$ are the internal vertices of $P$. The length of $P$ is equal to its number of edges. Given two distinct vertices $u,v\in V(G)$, a $(u,v)$-path is a path whose endpoints are precisely $u$ and $v$. For a subgraph $H\subset G$ and an edge $e\in E(G)$, we let $H+e$ denote the graph with edge set $E(H)\cup e$. Moreover, if $P=v_1\ldots v_t$ is a path and we denote $P_i=v_1\ldots v_i$ for $1\le i\le t$, then we define $H+P_i:=(H+P_{i-1})+v_{i-1}v_i$.

For a positive integer $n$, we let $[n]=\{1,\ldots, n\}$ denote the set of the first $n$ positive integers. Given real numbers $a,b,c$, we write $a=b\pm c$ to denote that $b-c\le a\le b+c$. We use standard notation for ``hierarchies'' of constants, writing $x\ll y$ to mean that there is a non-decreasing function $f : (0,1] \rightarrow (0, 1]$ such that all relevant subsequent statements hold for $x\leq f(y)$. Hierarchies with multiple constants are defined similarly.  We omit rounding signs where they are not crucial.

\subsection{Concentration bounds}

We need the following concentration bound, which is a simple corollary of a result of McDiarmid (stated as Lemma 6.1 in \cite{liebenau2023asymptotic}).

\begin{lemma}\label{lem:concentration} Let $n$ be sufficiently large and let $G$ be an $n$-vertex graph with $\delta(G)\geq \log^6 n$.
Let $R$ be a uniformly random subset of $V(G)$ of size $k\geq n/\log^4 n$, and let $v\in V(G)$. Then, \[\mathbb{P}\left(d(v, R)=(1\pm 1/10)d(v)\cdot\tfrac{k}{n}\right)\geq 1-1/n^2.\]
\end{lemma}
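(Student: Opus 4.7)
The plan is to recognize $d(v,R) = |N(v)\cap R|$ as a hypergeometric random variable: we are drawing $k$ elements uniformly without replacement from $V(G)$ and counting how many land in the ``success'' set $N(v)$, which has size $d(v)$. Its expectation is therefore
\[
\mu := \mathbb{E}[d(v,R)] = d(v)\cdot \frac{k}{n},
\]
and the lemma is simply asserting that $d(v,R)$ is tightly concentrated around $\mu$ with overwhelming probability.

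The McDiarmid-type bound cited in \cite{liebenau2023asymptotic} delivers a Chernoff-style tail inequality for precisely this setting of sampling without replacement: for any $\delta\in(0,1)$,
\[
\mathbb{P}\!\left(|d(v,R)-\mu|\geq \delta\mu\right)\leq 2\exp(-\delta^2\mu/3).
\]
I would plug this in directly with $\delta = 1/10$, so the task reduces to verifying that $\mu$ is large enough that $2\exp(-\mu/300)\leq 1/n^2$.

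The only thing to check is the lower bound on $\mu$. Since $d(v)\geq \delta(G)\geq \log^6 n$ and $k\geq n/\log^4 n$,
\[
\mu = d(v)\cdot\frac{k}{n} \;\geq\; \log^6 n \cdot \frac{1}{\log^4 n} \;=\; \log^2 n.
\]
Hence $2\exp(-\mu/300)\leq 2\exp(-\log^2 n/300)$, which is far smaller than $1/n^2$ once $n$ is sufficiently large. There is really no substantive obstacle here; the whole proof is a one-line invocation of the cited McDiarmid bound, and the only mildly delicate point — which I would flag explicitly — is choosing the hypotheses $\delta(G)\geq \log^6 n$ and $k\geq n/\log^4 n$ precisely so that the product $\mu$ exceeds $\log^2 n$, giving room for the Chernoff exponent to dominate $\log n$ and yield the stated $1/n^2$ failure probability.
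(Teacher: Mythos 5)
Your proposal is correct and matches the intended argument: the paper itself gives no proof, merely citing that the lemma is ``a simple corollary of a result of McDiarmid,'' which is exactly the Chernoff-type tail bound for hypergeometric (sampling-without-replacement) random variables that you invoke. The key arithmetic --- $\mu = d(v)\cdot k/n \geq \log^6 n / \log^4 n = \log^2 n$, so the failure probability $2\exp(-\mu/300)$ is superpolynomially small --- is precisely the reason the hypotheses $\delta(G)\geq \log^6 n$ and $k\geq n/\log^4 n$ appear in the statement.
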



\subsection{Properties of $(n,d,\lambda)$-graphs}
The next three results are standard properties of $(n,d,\lambda)$-graphs (see \cite{krivelevich2006pseudo}).
\begin{lemma}\label{lemma:secondeig}Every $(n,d,\lambda)$-graph satisfies $\lambda\ge \sqrt{d\cdot\frac{n-d}{n-1}}$.    
\end{lemma}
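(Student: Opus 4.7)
The plan is to proceed via a standard trace argument on the adjacency matrix. Let $A$ denote the adjacency matrix of the $(n,d,\lambda)$-graph $G$, and let its eigenvalues be $d=\lambda_1\ge \lambda_2\ge \cdots \ge \lambda_n$, where $\lambda_1=d$ because $G$ is $d$-regular (with the all-ones vector as eigenvector). By definition, $\lambda=\max(|\lambda_2|,|\lambda_n|)$, so in particular $\lambda^2 \ge \lambda_i^2$ for every $i\ge 2$.

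Next, I would use the identity $\operatorname{tr}(A^2)=\sum_{i=1}^n \lambda_i^2$. Combinatorially, $\operatorname{tr}(A^2)$ counts closed walks of length $2$ in $G$, which equals $\sum_v d(v)=nd$ since $G$ is $d$-regular. Isolating the contribution of $\lambda_1=d$, this yields
\[
\sum_{i=2}^n \lambda_i^2 \;=\; nd - d^2 \;=\; d(n-d).
\]

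Finally, I would apply averaging: since there are $n-1$ terms on the left and $\lambda^2$ is at least every one of them, we obtain
\[
\lambda^2 \;\ge\; \frac{1}{n-1}\sum_{i=2}^n \lambda_i^2 \;=\; \frac{d(n-d)}{n-1},
\]
and taking square roots gives the claimed bound. There is no real obstacle here; the only thing worth being careful about is making sure that $\lambda$ is defined as the second-largest eigenvalue \emph{in absolute value} (so the argument works regardless of whether the extremal index is $i=2$ or $i=n$), and that the trace formula is applied to $A^2$ rather than $A$ itself.
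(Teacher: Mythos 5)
Your proof is correct, and it is the standard trace argument that the paper implicitly refers to by citing the Krivelevich--Sudakov survey (the paper itself states this lemma without proof). No issues: the identification $\operatorname{tr}(A^2)=nd$, the isolation of $\lambda_1=d$, and the averaging over the remaining $n-1$ eigenvalues are all exactly right.
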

\begin{lemma}[Expander Mixing Lemma]\label{lemma:mixing}Let $G$ be an $(n,d,\lambda)$-graph.
Then, for every pair of (not necessarily disjoint) sets $A,B\subset V(G)$,
we have 
\[\left|e(A,B)-\tfrac{d}{n}|A||B|\right|<\lambda\sqrt{|A||B|}.\]
\end{lemma}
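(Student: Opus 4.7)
The plan is to use the spectral decomposition of the adjacency matrix, which is the classical proof of this standard result. Let $M$ denote the adjacency matrix of $G$. Since $M$ is real symmetric, it admits an orthonormal eigenbasis $v_1,\ldots,v_n$ with real eigenvalues $d=\lambda_1 \geq \lambda_2 \geq \ldots \geq \lambda_n$. Because $G$ is $d$-regular, the all-ones vector is an eigenvector of eigenvalue $d$, so we may take $v_1 = \mathbf{1}/\sqrt{n}$. By hypothesis $|\lambda_i| \leq \lambda$ for every $i\ge 2$.

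Next, I would expand the characteristic vectors $\chi_A$ and $\chi_B$ in this eigenbasis, writing $\chi_A = \sum_{i=1}^n \alpha_i v_i$ and $\chi_B = \sum_{i=1}^n \beta_i v_i$. A direct calculation gives the coefficients along $v_1$:
\[
\alpha_1 = \langle \chi_A, v_1\rangle = \tfrac{|A|}{\sqrt{n}}, \qquad \beta_1 = \tfrac{|B|}{\sqrt{n}},
\]
and Parseval's identity yields $\sum_i \alpha_i^2 = |A|$ and $\sum_i \beta_i^2 = |B|$.

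The key identity is $e(A,B) = \chi_A^\top M \chi_B$, which upon expanding in the eigenbasis becomes $\sum_{i=1}^n \lambda_i \alpha_i \beta_i$. The $i=1$ contribution is $\lambda_1 \alpha_1 \beta_1 = d|A||B|/n$, which is exactly the main term in the lemma. For the remaining sum, I would apply the triangle inequality, the bound $|\lambda_i|\leq \lambda$, and Cauchy--Schwarz:
\[
\left|\sum_{i\ge 2}\lambda_i \alpha_i \beta_i\right| \leq \lambda \sum_{i\ge 2}|\alpha_i||\beta_i| \leq \lambda \sqrt{\sum_{i\ge 2}\alpha_i^2}\sqrt{\sum_{i\ge 2}\beta_i^2}.
\]
Using Parseval together with the explicit values of $\alpha_1,\beta_1$, we get $\sum_{i\ge 2}\alpha_i^2 = |A|(1-|A|/n)$ and similarly for $B$, so the error is bounded by $\lambda\sqrt{|A|(1-|A|/n)|B|(1-|B|/n)} \leq \lambda\sqrt{|A||B|}$.

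There is no real obstacle here; the proof is entirely linear-algebraic and follows the textbook template. The only minor point worth noting is that the strict inequality in the lemma statement holds because $1-|A|/n < 1$ whenever $A \ne V(G)$, and if $A=V(G)$ (or $B=V(G)$) then all the $\alpha_i$ for $i\ge 2$ vanish, so the error term is exactly zero while the right-hand side $\lambda\sqrt{|A||B|}$ is positive (noting $\lambda>0$ since $G$ is nontrivial by Lemma~\ref{lemma:secondeig}).
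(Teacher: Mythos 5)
Your proof is correct: it is the standard spectral argument (orthonormal eigenbasis, splitting off the top eigenvector, Cauchy--Schwarz on the rest), which is exactly the classical proof of this lemma. The paper does not prove this statement itself but cites it as a standard property from \cite{krivelevich2006pseudo}, so your argument matches the intended (textbook) route; the only convention worth flagging is that for non-disjoint $A,B$ the identity $e(A,B)=\chi_A^{\top}M\chi_B$ counts edges inside $A\cap B$ twice, which is the usual convention under which the lemma is stated.
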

We say a graph $G$ is $m$-\textit{joined} if $e(A,B)\geq 1$ for any disjoint sets $A,B\subseteq V(G)$ with $|A|,|B|\geq m$.

\begin{lemma}\label{lemma:joined}Every $(n,d,\lambda)$-graph is $\frac{\lambda n}{d}$-joined.
\end{lemma}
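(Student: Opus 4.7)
The plan is to deduce this directly from the Expander Mixing Lemma (Lemma~\ref{lemma:mixing}), with no further input required. The only subtlety is to keep track of the strict inequality, since we need to conclude $e(A,B)\geq 1$ rather than merely $e(A,B)\geq 0$.

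More precisely, I would let $G$ be an $(n,d,\lambda)$-graph and let $A,B\subseteq V(G)$ be two disjoint sets with $|A|,|B|\geq \lambda n/d$. Applying Lemma~\ref{lemma:mixing} to the pair $(A,B)$ yields the strict inequality
\[
e(A,B) > \tfrac{d}{n}|A||B| - \lambda\sqrt{|A||B|}.
\]
The next step is to observe that under our size assumption $\sqrt{|A||B|}\geq \lambda n/d$, and hence
\[
\tfrac{d}{n}|A||B| \;=\; \tfrac{d}{n}\sqrt{|A||B|}\cdot \sqrt{|A||B|} \;\geq\; \tfrac{d}{n}\cdot \tfrac{\lambda n}{d}\cdot \sqrt{|A||B|} \;=\; \lambda\sqrt{|A||B|}.
\]
Combining the two inequalities gives $e(A,B) > 0$, and since $e(A,B)$ is a nonnegative integer this forces $e(A,B)\geq 1$, as required.

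The proof is essentially a one-line computation, so there is no real obstacle; the only thing to be careful about is using the strict form of the Expander Mixing Lemma as stated in Lemma~\ref{lemma:mixing}, because replacing it by a non-strict bound would only yield $e(A,B)\geq 0$ at the threshold $|A|=|B|=\lambda n/d$. If one wanted to prove the non-strict form of the Expander Mixing Lemma instead, one could compensate by taking the threshold to be $|A|,|B| > \lambda n/d$, but as stated above the claim follows cleanly.
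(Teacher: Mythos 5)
Your proof is correct. The paper itself does not supply a proof of Lemma~\ref{lemma:joined}; it is stated as one of three standard properties of $(n,d,\lambda)$-graphs and deferred to the reference \cite{krivelevich2006pseudo}. Your argument is precisely the standard one-line deduction from the Expander Mixing Lemma, and your attention to the strict inequality in Lemma~\ref{lemma:mixing} (so that $e(A,B)>0$ even at the threshold $|A|=|B|=\lambda n/d$) is exactly the right point to be careful about.
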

The following result, which is a simple corollary of the expander mixing lemma, will allow us to translate minimum degree conditions into an expansion property for small sets.
\begin{lemma}\label{lemma:expansion2}Let $\mu,C,D,z>0$ and $n \in \mathbb{N}$ such that $\mu Cz> 4D$, and let $d\in\mathbb N$ and $\lambda>0$ satisfy $d/\lambda\ge C\log^3 n$.
Suppose $G$ is an $(n,d,\lambda)$-graph which contains subsets $X,Y\subset V(G)$ such that for every $v\in X$,
$d(v,Y)\ge\mu dz/\log^3 n$. 
Then, every subset $S\subset X$ of size $|S|\le \lambda n/d$ satisfies $|N(S)\cap Y|\ge D|S|$.
\end{lemma}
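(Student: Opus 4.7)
My plan is to prove this by contradiction using the expander mixing lemma, in the standard way minimum-degree conditions are translated into expansion for small sets.

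Fix $S \subseteq X$ with $|S| \le \lambda n/d$, and set $T := N(S) \cap Y$. Suppose for contradiction that $|T| < D|S|$. I will obtain a contradiction by double-counting $e(S,Y)$: the degree hypothesis gives a lower bound, while the mixing lemma together with our upper bound on $|T|$ and $|S|$ gives an upper bound that is too small once $\mu C z > 4D$.

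For the lower bound, since every $v \in S \subseteq X$ satisfies $d(v,Y) \ge \mu dz/\log^3 n$, summing over $v \in S$ gives
\[ e(S,Y) \;\ge\; |S|\cdot \frac{\mu dz}{\log^3 n}. \]
For the upper bound, I note that $Y$ decomposes as a subset of $T \cup S \cup (V(G) \setminus (S \cup \Gamma(S)))$, and no vertex in the last set has a neighbour in $S$. Therefore
\[ e(S,Y) \;\le\; e(S,T) + e(S,S). \]
Applying Lemma~\ref{lemma:mixing} to each term, and using $|S| \le \lambda n/d$ together with the assumption $|T| < D|S| \le D\lambda n/d$, yields
\[ e(S,T) \le \tfrac{d}{n}|S||T| + \lambda\sqrt{|S||T|} < D\lambda|S| + \sqrt{D}\,\lambda|S|, \qquad e(S,S) \le \tfrac{d}{n}|S|^2 + \lambda|S| \le 2\lambda|S|. \]
Combining these (and absorbing the lower-order terms into the $D\lambda|S|$ term, e.g., using $D \ge 1$) gives $e(S,Y) \le 4D\lambda|S|$.

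Putting the two bounds together and dividing by $|S|$ leaves $\mu dz/\log^3 n \le 4D\lambda$, i.e.\ $d/\lambda \le (4D/\mu z)\log^3 n$. Since we assumed $d/\lambda \ge C\log^3 n$, this forces $\mu C z \le 4D$, contradicting the hypothesis $\mu C z > 4D$. I don't anticipate any real obstacle here: the only mild bookkeeping step is handling the $e(S,S)$ contribution arising from the fact that $S$ and $Y$ need not be disjoint, which is absorbed cleanly using $|S| \le \lambda n/d$.
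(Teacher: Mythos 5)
Your proof is correct and uses the same strategy as the paper: double-count $e(S,Y)$ for a hypothetical bad set $S$, lower-bounding it via the degree hypothesis and upper-bounding it via the expander mixing lemma, to contradict $d/\lambda \ge C\log^3 n$. In fact you are slightly more careful than the paper's written proof, since you explicitly split off the $e(S,S)$ contribution to handle the case $S\cap Y\neq\emptyset$, whereas the paper's first displayed inequality $\frac{\mu dz}{\log^3n}|S|\le e(S,Z)$ with $Z=N(S)\cap Y$ implicitly discards neighbours of $S$ lying in $Y\cap S$ (and the lemma is applied in the paper with overlapping $X$ and $Y$). The only cost of your version is that absorbing the extra $2\lambda|S|$ needs $D\ge 1$ rather than the $D\ge 1/9$ that the paper's displayed bound would allow; since every application has $D\ge 2$, this is harmless.
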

\begin{proof}Suppose that there exists a subset $S\subset X$ of size $1\le |S|\le \frac{\lambda n}{d}$ such that $|N(S)\cap Y|<D|S|$. Let $Z=N(S)\cap Y$.
Lemma~\ref{lemma:mixing} implies that
\[\frac{\mu dz}{\log^3n}|S|\le e(S,Z)\le \frac{d}{n}|S||Z|+\lambda \sqrt{|S||Z|}\le \lambda D|S|+\lambda |S|\sqrt{D}.\]
Then we have 
\[\frac{\mu dz}{\log^3n}\le \lambda D+\lambda \sqrt{D}\le 4\lambda D,\]
which contradicts that $d/\lambda\ge C\log^3 n$, as $\mu Cz > 4D$.
\end{proof}


The last result that we need gives a sufficient condition to find perfect matchings between small subsets of $(n,d,\lambda)$-graphs. 
\begin{lemma}\label{lemma:matching} Let $1/C\ll\varepsilon\ll 1$ and let $n,d\in\mathbb N$,
$n\ge 3$ and $\lambda >0$ satisfy $d/\lambda\ge C\log^3n$.
Suppose $G$ is an $(n,d,\lambda)$-graph that contains disjoint subsets $A,B\subset V(G)$ with $|A|=|B|$ such that $\delta (G[A,B])\ge \varepsilon d/2\log^3n$. 
Then, $G[A,B]$ contains a perfect matching.    
\end{lemma}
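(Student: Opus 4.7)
The plan is to verify Hall's condition for the bipartite graph $G[A,B]$: since $|A|=|B|$, it suffices to show that $|N_B(S)|\ge |S|$ for every $S\subseteq A$. I split the proof based on the size of $S$ relative to $\lambda n/d$, and in the harder regime further split based on the size of the complementary set $T:=B\setminus N_B(S)$.

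First, if $|S|\le \lambda n/d$, I apply Lemma~\ref{lemma:expansion2} with $X=A$, $Y=B$, $\mu=\varepsilon/2$, $z=1$ and $D=2$; the required inequality $\mu C z > 4D$ becomes $\varepsilon C/2>8$, which holds because $1/C\ll \varepsilon$, and directly yields $|N_B(S)|\ge 2|S|\ge |S|$.

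For the other regime, suppose $|S|>\lambda n/d$ and, for contradiction, $|N_B(S)|<|S|$. Set $T:=B\setminus N_B(S)$, so $e_G(S,T)=0$ and $|T|>|B|-|S|=|A\setminus S|$. If $|T|\ge \lambda n/d$, then $S$ and $T$ are disjoint sets each of size at least $\lambda n/d$, and Lemma~\ref{lemma:joined} immediately gives $e_G(S,T)\ge 1$, a contradiction. Hence we may assume $|T|<\lambda n/d$, which also forces $|A\setminus S|<|T|<\lambda n/d$.

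This final sub-case is the main obstacle, because joinedness is no longer strong enough. The key observation is that every vertex $v\in T$ has no neighbour in $S$, so all of its (at least $\varepsilon d/(2\log^3 n)$) neighbours in $A$ lie in $A\setminus S$; summing over $v\in T$ gives $e_G(A\setminus S, T) \ge |T|\varepsilon d/(2\log^3 n)$. On the other hand, Lemma~\ref{lemma:mixing} provides the upper bound $e_G(A\setminus S, T) \le (d/n)|A\setminus S||T| + \lambda\sqrt{|A\setminus S||T|}$. Since $|A\setminus S|<\lambda n/d$ implies $(d/n)|A\setminus S|<\lambda \ll \varepsilon d/(4\log^3 n)$ (using $d/\lambda\ge C\log^3 n$ and $1/C\ll \varepsilon$), the first term can be absorbed into the left-hand side, leaving $|T|\varepsilon d/(4\log^3 n)\le \lambda\sqrt{|A\setminus S||T|}$. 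Dividing by $\sqrt{|T|}$ and rearranging yields $\sqrt{|T|/|A\setminus S|}\le 4\lambda\log^3 n/(\varepsilon d)\le 4/(\varepsilon C)<1$, i.e.\ $|T|<|A\setminus S|$. This contradicts $|T|>|A\setminus S|$, so Hall's condition holds and the desired matching follows.
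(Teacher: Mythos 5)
Your proof is correct and follows essentially the same route as the paper's: verify Hall's condition, handle small $|S|\le \lambda n/d$ via Lemma~\ref{lemma:expansion2}, and for large $S$ use $m$-joinedness to force the uncovered set $T=B\setminus N_B(S)$ and the complement $A\setminus S$ to both be smaller than $\lambda n/d$, then derive a contradiction from the fact that every vertex of $T$ sends all of its $\ge\varepsilon d/(2\log^3 n)$ neighbours in $A$ into $A\setminus S$. The only presentational difference is that you unwind this last step via a direct application of the expander mixing lemma between $A\setminus S$ and $T$, whereas the paper invokes Lemma~\ref{lemma:expansion2} a second time (for subsets of $B$) together with the inclusion $N(T)\cap A\subseteq A\setminus S$; the two are the same calculation.
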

\begin{proof} As $\delta(G[A,B])\ge \varepsilon d/2\log^3 n$ and $1/C\ll \varepsilon$,
using Lemma~\ref{lemma:expansion2}, with $z_{\ref{lemma:expansion2}} = 1$, $D=2$, and $\mu=\eps/2$, we may add the following expansion properties:
\begin{itemize}
    \item for every subset $S\subset A$ with $|S|\le \frac{\lambda n}{d}$, $|N(S)\cap B|\ge 2|S|$, and
    \item for every subset $S\subset B$ with $|S|\le \frac{\lambda n}{d}$, $|N(S)\cap A|\ge 2|S|$.
\end{itemize}
We will show that Hall's matching criterion holds. 
For contradiction, suppose that there exists a subset $S\subset A$ with $|N(S)\cap B|<|S|$. 
Clearly, we have $|S|>\frac{\lambda n}{d}$. 
Then, by Lemma~\ref{lemma:joined}, $G[A,B]$ is $\frac{\lambda n}{d}$-joined and thus 
\begin{equation}\label{eq:hall:1}|S|>|N(S)\cap B|\ge |B|-\tfrac{\lambda n}{d}.\end{equation}
Letting $X=A\setminus S$ and $Y=B\setminus N(S)$, we have that $|X|\le |Y|\le \frac{\lambda n}{d}$. 
Then using the expansion properties we see that $|X|\ge |N(Y)\cap A|\ge 2|Y|$, a contradiction.
Then, $G[A,B]$ satisfies Hall's matching criterion and therefore $G[A,B]$ has a perfect matching.
\end{proof}
\subsection{The extendability/rollback method}
All the results we cite below can be found in~\cite{montgomery2019spanning}. The first four are typically described as the Friedman-Pippenger tree embedding technique, subsequently developed by Haxell~\cite{H2001}.
The basic idea is that a cleverly defined inductive hypothesis (Definition~\ref{def:dmextendable}) can be maintained while extending an embedding by the addition of a leaf.
\par An idea that can be traced back to Johannsen states that this process is in fact reversible,
meaning that $(D,m)$-extendability -- defined below -- can also be maintained by the removal of leaves (this is called a `rollback' in \cite{draganic2022rolling}). Using this idea, one can efficiently find paths between prescribed vertices in an extendable subgraph, by iteratively exploring the neighbourhoods of the respective vertices until we find an overlap, and then rolling back by removing all the unused vertices in the process (see Lemma~\ref{lemma:connecting} for the precise statement).

\begin{definition}\label{def:dmextendable}Let $D,m\in\mathbb N$ with $D\ge 3$.
Let $G$ be a graph and let $S\subset G$ be a subgraph with $\Delta(S) \leq D$.
We say that $S$ is $(D,m)$-extendable 
if for all $U\subset V(G)$ with $1\le |U|\le 2m$ we have
    \begin{equation}\label{def:extendability}
        |\Gamma_G(U)\setminus V(S)|\ge (D-1)|U|-\sum_{u\in U\cap V(S)}(d_S(u)-1).
    \end{equation}
\end{definition}
The following result says that it is enough to control the external neighbourhood of small sets in order to verify extendability.

\begin{proposition}\label{prop:weakerextendability}Let $D,m\in\mathbb N$ with $D\ge 3$.
Let $G$ be a graph and let $S\subset G$ be a subgraph with $\Delta(S)\le D$. 
If for all $U\subset V(G)$ with $1\leq|U|\leq 2m$ we have 
\[|N_G(U)\setminus V(S)|\geq D|U|,\]
then $S$ is $(D,m)$-extendable in $G$.
\end{proposition}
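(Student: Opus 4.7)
The plan is to verify Definition~\ref{def:dmextendable} directly from the hypothesis, exploiting the fact that the external neighbourhood $N_G(U)$ is contained in the (open) neighbourhood $\Gamma_G(U)$. Fix any $U\subset V(G)$ with $1\le |U|\le 2m$; we need to show that
\[
|\Gamma_G(U)\setminus V(S)|\ge (D-1)|U|-\sum_{u\in U\cap V(S)}(d_S(u)-1).
\]

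First, I would observe that since $N_G(U)=\Gamma_G(U)\setminus U\subseteq\Gamma_G(U)$, we have $N_G(U)\setminus V(S)\subseteq \Gamma_G(U)\setminus V(S)$, and so the hypothesis immediately gives $|\Gamma_G(U)\setminus V(S)|\ge |N_G(U)\setminus V(S)|\ge D|U|$. Thus, it only remains to show that the right-hand side of the displayed inequality is at most $D|U|$.

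For this, I would use the trivial bound $d_S(u)\ge 0$, which gives $-(d_S(u)-1)\le 1$ for every $u\in U\cap V(S)$. Summing over $u\in U\cap V(S)$ and using $|U\cap V(S)|\le |U|$, this yields
\[
(D-1)|U|-\sum_{u\in U\cap V(S)}(d_S(u)-1)\le (D-1)|U|+|U\cap V(S)|\le D|U|,
\]
and chaining with the previous bound completes the verification of \eqref{def:extendability}.

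There is no real obstacle in this proposition; it is essentially a convenient packaging of the fact that a lower bound on external neighbourhoods suffices to certify $(D,m)$-extendability, which is useful because external neighbourhoods are usually easier to handle via the expansion properties of $(n,d,\lambda)$-graphs (for instance via Lemma~\ref{lemma:expansion2}).
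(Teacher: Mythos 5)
Your proof is correct. The paper states Proposition~\ref{prop:weakerextendability} without a written proof, treating it as a routine observation, and your argument is exactly the natural one: bound the left-hand side of \eqref{def:extendability} below by $D|U|$ using the containment $N_G(U)\setminus V(S)\subseteq \Gamma_G(U)\setminus V(S)$, and bound the right-hand side above by $D|U|$ using $d_S(u)\ge 0$ together with $|U\cap V(S)|\le|U|$.
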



The following result states that we can embed nearly spanning trees in an extendable way. 
\begin{lemma}
[{\cite[Corollary 3.7]{montgomery2019spanning}}]\label{lemma:extendable:embedding}
Let $D,m\in\mathbb N$ with $D\ge 3$, and let $G$ be an $m$-joined graph. 
Let $T$ be a tree with $\Delta(T)\le D/2$ and let $H$ be a $(D,m)$-extendable subgraph of $G$ with maximum degree at most $D/2$. 

If $|H|+|T|\le|G|-(2D+3)m$, 
then for every vertex $t\in V(T)$ and $v\in V(H)$,
there is a copy $S$ of $T$ in $G-V(H-v)$ in which $t$ is copied to $v$ and,
moreover, $S\cup H$ is a $(D,m)$-extendable subgraph of $G$.
\end{lemma}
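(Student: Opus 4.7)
The plan is to prove this by induction on $|T|$, growing the embedding one leaf at a time while maintaining the $(D,m)$-extendability of the union with $H$.

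For the base case $|T|=1$, set $S=\{v\}$ (no edges); then $S\cup H=H$ is $(D,m)$-extendable by hypothesis. For the inductive step $|T|\ge 2$, pick any leaf $\ell\ne t$ of $T$ with neighbour $p$, and apply the induction hypothesis to $T-\ell$ (with the same distinguished $t$) to obtain a copy $S'$ of $T-\ell$ in which $t$ maps to $v$ and $S'\cup H$ is $(D,m)$-extendable. Let $v_p\in V(S'\cup H)$ be the image of $p$. Using $\Delta(T),\Delta(H)\le D/2$ together with the fact that $S'$ and $H$ share only the vertex $v$, one checks that $d_{S'\cup H}(v_p)\le D-1$ in every case: if $v_p\ne v$ then $d_{S'\cup H}(v_p)=d_{S'}(v_p)\le \Delta(T)-1\le D/2-1$, and if $v_p=v$ (i.e.\ $p=t$) then $d_{S'\cup H}(v_p)\le (\Delta(T)-1)+\Delta(H)\le D-1$. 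It then suffices to find $y\in N_G(v_p)\setminus V(S'\cup H)$ such that $(S'\cup H)+v_p y$ is still $(D,m)$-extendable; setting $S:=S'+v_p y$ completes the induction step.

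This reduces the whole proof to a one-leaf extension claim: \emph{if $Q\subseteq G$ is $(D,m)$-extendable, $x\in V(Q)$ satisfies $d_Q(x)\le D-1$, and $|Q|+(2D+3)m\le |G|$, then there exists $y\in N_G(x)\setminus V(Q)$ such that $Q+xy$ is $(D,m)$-extendable.} I would prove this by contradiction: suppose every candidate $y\in N_G(x)\setminus V(Q)$ is bad, meaning there is a witness set $U_y\subseteq V(G)$ with $1\le |U_y|\le 2m$ at which the extendability inequality fails for $Q+xy$. A careful comparison of the failing inequality at $U_y$ with the valid one for $Q$ shows that adding the single edge $xy$ can destroy extendability at $U_y$ only in the narrow case that $U_y$ is tight for $Q$, $x\notin U_y$, and $y\in \Gamma_G(U_y)\setminus V(Q)$. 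One then greedily aggregates the $U_y$'s into a single set $U^\ast$ of total size at most $2m$ whose external neighbourhood captures all bad $y$, applies $(D,m)$-extendability of $Q$ at $U^\ast$ to upper bound $|\Gamma_G(U^\ast)\setminus V(Q)|$, and compares this against the lower bound $|N_G(x)\setminus V(Q)|\ge D-d_Q(x)\ge 1$ obtained from extendability at $\{x\}$. The slack $(2D+3)m$ in the hypothesis is precisely what keeps $|U^\ast|$ and the various exclusion sets within workable limits, yielding the desired contradiction.

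The main obstacle is this one-leaf extension claim, specifically the bookkeeping behind the case analysis on how a single added edge can break an extendability inequality, and the greedy aggregation of witness sets so that their total size stays within the $2m$ window. Once the extension claim is established, the outer induction on $|T|$ is routine: one performs $|T|-1$ successive leaf extensions, each consuming at most one vertex outside $V(H)$, so the size condition needed for the extension claim is preserved throughout by the global hypothesis $|H|+|T|\le |G|-(2D+3)m$.
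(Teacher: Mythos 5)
The paper does not prove this lemma --- it cites it from Montgomery --- so there is no in-paper proof to compare against, but your sketch (leaf-by-leaf induction reduced to a one-edge extension claim for $(D,m)$-extendable subgraphs) is precisely the Friedman--Pippenger/Haxell/Montgomery route, and your degree and size bookkeeping in the outer induction are correct.

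The place where your sketch as written does not close is the contradiction at the end of the extension claim. Comparing the tight value $|\Gamma_G(U^\ast)\setminus V(Q)|=(D-1)|U^\ast|-\sum_{u\in U^\ast\cap V(Q)}(d_Q(u)-1)$ against the lower bound $|N_G(x)\setminus V(Q)|\geq D-d_Q(x)$ yields nothing: the tight value grows linearly in $|U^\ast|$ and can vastly exceed $D$. The actual punchline is to apply the extendability inequality to $U^\ast\cup\{x\}$. Since every bad $y\in N_G(x)\setminus V(Q)$ already lies in $\Gamma_G(U^\ast)$, adjoining $x$ to $U^\ast$ adds nothing to $\Gamma_G(\cdot)\setminus V(Q)$, while the required lower bound in the extendability inequality increases by exactly $D-d_Q(x)$; tightness of $U^\ast$ then forces $d_Q(x)\geq D$, contradicting $d_Q(x)\leq D-1$. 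Relatedly, the aggregation is not a greedy size bound on the $U_y$'s: what one actually uses is that the union of two tight sets of size at most $m$ each is again a tight set of size at most $2m$ (a submodularity-type calculation, applying extendability to the intersection), and the case where a tight set of size in $[m,2m]$ appears is ruled out by $m$-joinedness together with $|Q|+(2D+3)m\leq|G|$ --- this is where the slack $(2D+3)m$ is actually spent. With these two corrections the argument goes through as you intended.
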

The following is a key result, proved by using the rollback idea of Johannsen described earlier.
\begin{lemma}[{\cite[Corollary 3.12]{montgomery2019spanning}}]\label{lemma:connecting}Let $D,m\in\mathbb N$ with $D\ge 3$, and let $k=\lceil \log (2m)/\log (D-1)\rceil$.
Let $\ell\in\mathbb N$ satisfy $\ell\ge 2k+1$ and let $G$ be an $m$-joined graph which contains a $(D,m)$-extendable subgraph $S$ of size $|S|\le |G|-10Dm-(\ell-2k-1)$.

Suppose that $a$ and $b$ are two distinct vertices in $S$ with $d_S(a),d_S(b)\le D/2$. 
Then, there exists an $a,b$-path $P$ of length $\ell$ such that 
(i) all internal vertices of $P$ lie outside $S$, and 
(ii) $S+P$ is $(D,m)$-extendable.
\end{lemma}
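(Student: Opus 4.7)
The plan is to use Johannsen's rollback technique: first grow a large $(D,m)$-extendable subgraph containing $a$ and $b$ inside controlled tree-like structures, then use $m$-joinedness to splice them together into an $(a,b)$-path of length exactly $\ell$ via a single connecting edge, and finally rollback by removing the unused vertices leaf-by-leaf, each such removal preserving $(D,m)$-extendability as the inverse of a Friedman--Pippenger single-leaf extension.

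\textbf{Construction and connection.} Starting from $S$, I would first extend $a$ into a bare path $P_a=a\,u_1 \cdots u_{\ell-2k-1}$ by repeated single-leaf extensions; at each step the $(D,m)$-extendability of the current subgraph applied to $U=\{u_i\}$ supplies a candidate neighbour outside, and the hypothesis $d_S(a)\le D/2$ ensures the initial extension from $a$ is possible. From $u_{\ell-2k-1}$ (which has degree $1$ in the current subgraph) I would grow a complete tree $T_a$ of depth $k$ with branching $D-1$ at every level, and from $b$ (using $D-d_S(b)\ge D/2$ branches at the root) grow a tree $T_b$ of depth $k$. Since $(D-1)^k \ge 2m$, the leaf sets $L_a\subseteq V(T_a)$ and $L_b\subseteq V(T_b)$ both have size at least $m$, and the total number of added vertices is at most $(\ell-2k-1)+O((D-1)^{k+1})$, comfortably fitting into the slack $10Dm+(\ell-2k-1)$. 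Since $G$ is $m$-joined, there is an edge $xy\in E(G)$ with $x\in L_a$ and $y\in L_b$; concatenating $P_a$, the tree-path in $T_a$ from $u_{\ell-2k-1}$ up to $x$, the edge $xy$, and the tree-path from $y$ down to $b$ in $T_b$ yields an $(a,b)$-path $P$ of length exactly $(\ell-2k-1)+k+1+k=\ell$, whose internal vertices lie outside $S$.

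\textbf{Rollback and main obstacle.} To finish, I remove the vertices of $V(T_a\cup T_b)\setminus V(P)$ one leaf at a time (from the outermost leaves inward) from the currently $(D,m)$-extendable subgraph. Each such single-leaf removal preserves $(D,m)$-extendability by direct inspection of Definition~\ref{def:dmextendable}: deleting a leaf $w$ with parent $v$ only enlarges $|\Gamma_G(U)\setminus V(\cdot)|$, while the right-hand side of the extendability inequality either stays the same or increases by exactly $1$ in the case $v\in U$, which is matched on the left since then $w\in N_G(v)\subseteq \Gamma_G(U)$. Iterating, I arrive at $S+P$, which is $(D,m)$-extendable. The main technical obstacle is not the rollback (a clean inverse calculation) nor the connection step (immediate from $m$-joinedness), but the construction phase: every single-leaf extension must be carried out so that $(D,m)$-extendability is maintained for \emph{every} subsequent test set $U$ of size up to $2m$. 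This is handled by the classical Friedman--Pippenger selection principle, where the abundance of candidates guaranteed by the extendability inequality allows one to pick a leaf that does not make any small $U$ tight.
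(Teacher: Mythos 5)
This lemma is imported verbatim from Montgomery's paper (Corollary~3.12 of the cited reference) and is not proved in the present paper, so there is no internal proof to compare against. Your reconstruction --- grow a bare path of length $\ell-2k-1$ from $a$ and then $(D-1)$-branching trees of depth $k$ from its end and from $b$ while preserving extendability via Friedman--Pippenger leaf additions, connect the two leaf sets (each of size at least $m$) by an edge supplied by $m$-joinedness, and roll back the unused tree vertices leaf by leaf --- is exactly the standard argument behind that result, and your length accounting and the leaf-removal calculation for preserving $(D,m)$-extendability are correct.
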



\section{Sorting networks}\label{sec:sorting}
\par A {\it parallel comparison network} is a pair $(R, \mathcal{C})$ which consists of a set $R := \{r_1, r_2, \ldots, r_n\}$ of $n$ registers,
and a sequence of sets $\mathcal{C}=C_1, C_2, C_3, \ldots$,
where each $C_i$ is a collection of disjoint pairs of registers $(r_i, r_j)$ with $i < j$, i.e., for any  $(r_{i_1}, r_{i_2}), (r_{i_3}, r_{i_4}) \in C_i$ we have that $i_1, i_2, i_3, i_4$ are all distinct.
The {\it depth} of such a network is defined to be the length of the sequence $\mathcal{C}$.

\par Let $\rho_0\colon R\to [n]$ be a bijection assigning values to each register. For some $1\le i\leq \ell$, suppose, inductively, that we have defined bijections $\rho_0$, $\rho_1$, $\ldots$ , $\rho_{i-1}:R\to [n]$, and let us define $\rho_i\colon R\to [n]$ as follows.
For distinct indices $j_1, j_2 \in [n]$, we let $\rho_i(r_{j_1}) = \rho_{i-1}(r_{j_2})$ and $\rho_i(r_{j_2}) = \rho_{i-1}(r_{j_1})$ if both $(r_{j_1}, r_{j_2}) \in C_i$ and $\rho_{i-1}(r_{j_1}) > \rho_{i-1}(r_{j_2})$,
and, otherwise, $\rho_i(r_{j_1}) = \rho_{i-1}(r_{j_1})$ and $\rho_i(r_{j_2}) = \rho_{i-1}(r_{j_2})$. 
Further, if there exists any $j\in [n]$ such that $r_j \notin (r_{j_1}, r_{j_2})$ for any $(r_{j_1}, r_{j_2}) \in C_i$, then we have $\rho_i(r_{j}) = \rho_{i-1}(r_{j})$.
That is, $\rho_i$ swaps the values in registers $r_{j_1}$ and $r_{j_2}$ given by $\rho_{i-1}$ if we have both that this pair of registers was in $C_i$ and the values given to them by $\rho_{i-1}$ decrease from $r_{j_1}$ to $r_{j_2}$. Otherwise, $\rho_i$ does not change the values.
\par We say that a parallel comparison network $(R, \mathcal{C})$ with depth $\ell$ is a {\it parallel sorting network with depth $\ell$} if for \emph{any} initial assignment $\rho_0\colon R\to [n]$,
we have that $\rho_{\ell} = I$ where $I:R \to [n]$ is defined as $I(r_i) = i$ for all $i\in[n]$.

We need the following result of Ajtai, Koml\'{o}s and Szemer\'{e}di~\cite{ajtai19830} that gives a parallel sorting network with depth logarithmic in the number of registers.

\begin{theorem}[Ajtai-Koml\'os-Szemer\'edi~\cite{ajtai19830}]\label{thm:AKS_sorting} There exists an absolute constant $C_{\ref{thm:AKS_sorting}}$ such that the following holds.
For every $n$, there exists a parallel sorting network $(R, \mathcal{C})$ with $n$ registers and depth at most $C_{\ref{thm:AKS_sorting}}\log n$. 
\end{theorem}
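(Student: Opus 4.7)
The plan is to construct the sorting network in two stages: first build small, constant-depth primitives from expander graphs, and then glue them together using a recursive tree-based procedure whose depth is $O(\log n)$.

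The basic primitive is an $\varepsilon$-\emph{halver}, a constant-depth comparison network on $2m$ registers which, started from any value assignment, leaves at most $\varepsilon m$ of the $m$ largest values in the bottom half (and symmetrically for the bottom $m$). To construct one, I would take a bipartite $d$-regular expander $H$ between the top and bottom $m$ registers with spectral gap bounded away from $1$; by König's theorem, $E(H)$ decomposes into $d$ perfect matchings. Applying each matching as a single layer of comparisons (with the ``low'' side as the smaller register) gives a depth-$d$ network. A short expander-mixing-lemma argument shows that after $d$ rounds, the number of misplaced values on each side is at most $\varepsilon m$, for an $\varepsilon$ controlled by the spectral gap; choosing $d$ large enough we can make $\varepsilon$ an arbitrarily small constant. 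By chaining $O(1)$ halvers on nested subsets of registers we then obtain, with no loss of depth, $\varepsilon$-\emph{separators}, which operate on $(1+\delta)m+m$ registers and guarantee an error term polynomial in $\varepsilon$.

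The main construction organises the $n$ registers into a complete binary tree $T$ of depth $\log_2 n$. Each node $v$ of $T$ holds a \emph{bag} of registers, with the intended semantics that values in the bag of $v$ should ultimately end up at the leaves of the subtree rooted at $v$. At each time step, every internal node $v$ (in parallel across a level) runs an $\varepsilon$-separator on its bag together with a small ``transit'' portion sent down from its parent, splitting the result into a ``small'' packet forwarded to the left child and a ``large'' packet forwarded to the right child. Since the separators at different nodes act on disjoint register sets, an entire time step is a single $O(1)$-depth layer of the comparison network. Leaves freeze registers that have settled. I would analyse this with a potential function counting registers that are currently ``displaced'' (i.e.\ in a subtree disagreeing with their correct leaf). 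The separator guarantees imply that at each step the displacement at every node shrinks by a constant factor, up to an error from mistakes committed higher in the tree; with the bag sizes along the tree chosen geometrically decreasing, this error is absorbed and the total displacement decays geometrically. After $O(\log n)$ time steps the displacement drops below $1$, so every register is at its correct leaf and $\rho_\ell = I$.

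The main obstacle, and the technical heart of the AKS paper, is step three: controlling error propagation through the tree. A single mis-separation at a high node $v$ can pollute the entire subtree below $v$, so na\"ive analysis yields error that grows with depth rather than decays. The fix requires carefully tuned parameters: the bag size, the $\varepsilon$ of the separator, and the ``overlap'' $\delta$ must all be chosen as explicit functions of the level so that a quantitative invariant of the form ``$(\text{fraction of displaced registers at level } k) \le \alpha^t\beta^k$'' is preserved by one parallel step. Setting up this invariant and verifying that it is restored by the separator action at every node, in the face of registers streaming in from both the parent and the children, is the delicate combinatorial calculation that powers the whole construction; once it is in place, the $O(\log n)$ depth bound follows immediately from the geometric decay.
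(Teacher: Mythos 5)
This statement is not proved in the paper at all: it is imported verbatim as a black box from the 1983 Ajtai--Koml\'os--Szemer\'edi paper, so there is no in-paper argument to compare yours against. Judged on its own terms, your proposal is a faithful high-level roadmap of the actual AKS construction (\(\varepsilon\)-halvers from bipartite expanders via a matching decomposition, separators by chaining halvers, a binary tree of bags with parallel separators at each level, and a level-indexed geometric invariant controlling displaced registers), so the architecture is right.

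There are, however, two gaps that keep this from being a proof. First, and most substantively, the routing mechanism you describe is purely downward: each node's separator splits its bag into a ``small'' packet for the left child and a ``large'' packet for the right child. With only downward flow, a value that is mis-routed at an ancestor (sent into the wrong subtree) can \emph{never} return to the correct subtree, so no amount of further separation repairs it; your own observation that ``a single mis-separation at a high node can pollute the entire subtree'' is exactly the failure mode this mechanism cannot escape. The AKS/Paterson construction avoids this by a three-way split: each separator also peels off a small fraction of extremal elements and sends them \emph{back up} to the parent (the ``kickback''), which is the sole channel through which errors committed higher in the tree get corrected. Your closing remark about ``registers streaming in from both the parent and the children'' gestures at this, but the mechanism as stated omits it, and the invariant you propose cannot be preserved without it. Second, you explicitly defer the error-propagation calculation --- the choice of bag sizes, separator strengths, and kickback fractions as functions of the level, and the verification that the invariant is restored after one parallel step --- and this is not a routine verification but the entire technical content of the theorem; as written, the proposal is an accurate description of what must be proved rather than a proof of it.
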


The earliest use of this result in extremal graph theory, that the authors are aware of, is by K\"{u}hn, Lapinskas, Osthus and Patel~\cite{kuhnlapinskas2014}. This result has also been recently used in a group theoretic setting in \cite{muyesser2022random}. The key challenge here, compared to \cite{kuhnlapinskas2014} and \cite{muyesser2022random}, is that the underlying host graphs we are working with are quite sparse.  

\par The next lemma gives a construction which provides a graph theoretic analogue of the comparison operation in a sorting network. The key feature is that the graph constructed has arbitrarily large girth, and can be found inside a sparse expander graph. In particular, the graph will be \textit{path-constructible}. 

\begin{definition}\label{Def_constructible}
    Let $G$ be a graph and let $A\subseteq V(G)$. We say that $G$ is $A$-path-constructible if there exists a sequence of edge-disjoint paths $P_1,\ldots, P_t$ in $G$ with the following properties.
    \begin{enumerate}[label= \upshape(\roman{enumi})]
        \item $E(G)=\bigcup_{j\in [t]} E(P_j)$.
        \item  For each $i\in [t]$, the internal vertices of $P_i$ are disjoint from $A\cup \bigcup_{j\in [i-1]}V(P_j)$.
        \item For each $i\in [t]$, at least one of the endpoints of $P_i$ belongs to $A\cup \bigcup_{j\in [i-1]}V(P_j)$.
    \end{enumerate}
\end{definition}

\begin{figure}
    \centering
    \includegraphics[width=0.7\textwidth]{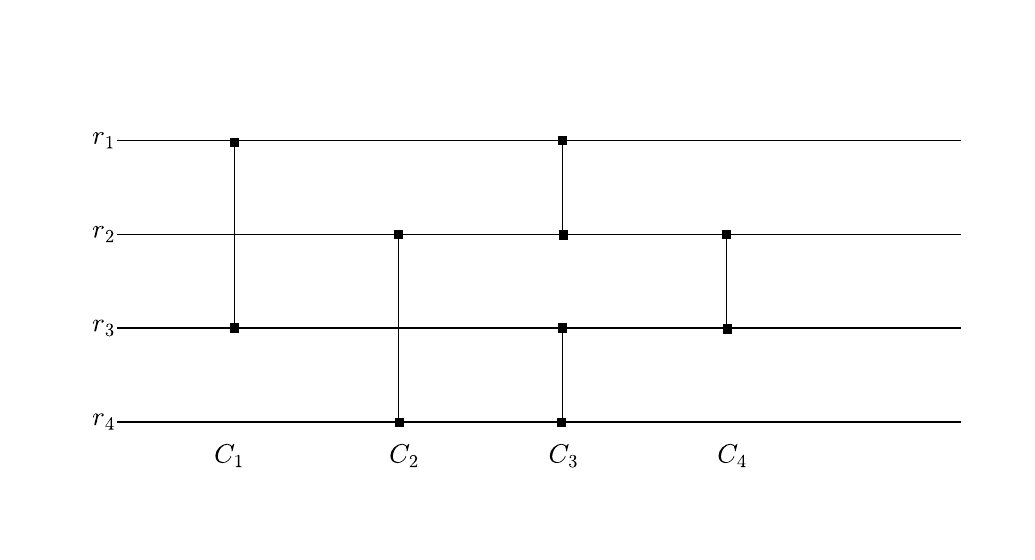}
    \caption{A depiction of a parallel comparison network with $4$ registers and depth $4$. 
    This is in fact a parallel sorting network. To see this, note that after the first $3$ levels, 
    the value $1$ will always be at $r_1$, and the value $4$ will always be at $r_4$, and the final level sorts the middle two layers.}
    \label{fig:sortingnetwork}
\end{figure}

\begin{lemma}\label{lemma:gadgets_exist_more_abstract}
  For every $k\in\mathbb N$ with $k \equiv 2\mod 4$, there exists a graph $G_k$, with distinct vertices $v^{\inn}_1,v^{\inn}_2,v^{\out}_1,v^{\out}_2$ and paths $P_1,P_2,Q_1,Q_2\subset G_k$, such that the following properties hold.
  \begin{enumerate}[label=\upshape(\roman{enumi})]
    \item \label{it:small} $|G_k|= 2k(k-1)$.
    \item\label{it:maxdeg} $\Delta(G_k) = 3$.
     \item\label{it:decompose} For each $z\in \{v^{\inn}_1, v^{\inn}_2\}$, $G_k$ is $\{z\}$-path-constructible with paths of length between $k$ and $2k+1$.
    \item \label{it:ends} $P_1$ is a $v^{\inn}_1, v^{\out}_2$-path, $Q_1$ is a $v^{\inn}_2, v^{\out}_1$-path, $P_2$ is a $v^{\inn}_1, v^{\out}_1$-path and $Q_2$ is a $v^{\inn}_2, v^{\out}_2$-path.
    \item \label{it:cover} For each $i \in [2]$, $V(G_k) = V(P_i) \ \dot{\cup} \ V(Q_i)$, i.e. the vertex set of $G_k$ is partitioned by $V(P_i)$ and $V(Q_i)$.
    \item \label{it:size} $|V(P_1)| = |V(P_2)| = |V(Q_1)| = |V(Q_2)| = k(k-1)$.
  \end{enumerate}
\end{lemma}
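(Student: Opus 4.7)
My plan is to construct $G_k$ explicitly as two long parallel paths connected by a small switch gadget, verify properties (i), (ii), (iv), (v), (vi) directly from the construction, and then address the path-constructibility property (iii) via an explicit edge decomposition.

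Concretely, let $n := k(k-1)$. I would take two vertex-disjoint paths $A = a_0 a_1 \cdots a_{n-1}$ and $B = b_0 b_1 \cdots b_{n-1}$, set $v_1^{\inn} := a_0$, $v_2^{\inn} := b_0$, $v_1^{\out} := a_{n-1}$, $v_2^{\out} := b_{n-1}$, and add a small switch gadget $S$ between $A$ and $B$ in the middle region, consisting of a constant number of extra edges. The switch is chosen so that $G_k$ admits both a ``straight'' partition $(P_2, Q_2) := (A, B)$ and a ``swapped'' partition $(P_1, Q_1)$ in which $P_1$ goes from $a_0$ to $b_{n-1}$ and $Q_1$ from $b_0$ to $a_{n-1}$, each having $k(k-1)$ vertices and being vertex-disjoint from its partner. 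Properties (i), (ii), (iv), (v), (vi) then follow by inspection: $|V(G_k)| = 2k(k-1)$; the switch vertices have degree $3$ and all others have degree at most $2$; and the paths $P_1, P_2, Q_1, Q_2$ have the required endpoints, sizes, and pairwise partition $V(G_k)$.

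The main technical step is (iii), the path-constructibility from $v_1^{\inn}$ (the $v_2^{\inn}$ case follows by the symmetry between $A$ and $B$). I would build the decomposition $R_1, \ldots, R_t$ as follows: (a) chunk the initial portion of $A$ from $a_0$ into paths of length $k$; (b) at the switch, take one ``transition'' path of length in $[k, 2k+1]$ that absorbs all switch edges together with surrounding chunks of $A$ and $B$; (c) continue along the remaining parts of $A$ and $B$ with chunks of length $k$, allowing the final chunks to be extended up to $2k+1$ in order to absorb any leftover edges. The endpoint condition and the ``internal vertices new'' condition of Definition~\ref{Def_constructible} are maintained inductively, with each new $R_j$ starting at an already-explored endpoint. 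The congruence $k \equiv 2 \pmod{4}$ is used to guarantee that the residues of the chunk arithmetic work out so that every chunk length lies in $[k, 2k+1]$.

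The main obstacle, and the reason the switch cannot be entirely naive, is ensuring that no switch edge becomes ``stranded'' --- that is, forced to form a path of length $< k$ after the surrounding track vertices have been explored. A naive switch consisting of just two crossing edges $a_i b_{i+1}$ and $a_{i+1} b_i$ produces a $4$-cycle on $\{a_i, a_{i+1}, b_i, b_{i+1}\}$, and any simple path in this cycle uses at most three of its four edges, stranding the fourth once the outer-neighbour edges are used up. To avoid this, I would take $S$ to be a slightly larger switch, for example one based on a $6$-cycle or a small bipartite gadget distributed over a few adjacent positions, so that each switch edge lies on some transition path that extends outward into $A$ or $B$ for at least $k-O(1)$ edges. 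With the switch so designed, all switch edges are absorbed cleanly and property (iii) follows.
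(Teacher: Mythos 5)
Your observations about properties (i), (ii), (iv), (v), (vi) are fine if such a switch existed, and you correctly diagnose the stranding problem caused by the $4$-cycle on $\{a_i,a_{i+1},b_i,b_{i+1}\}$: once three of its four edges are absorbed, all four vertices are explored, and the last edge cannot live in any path of length $\geq k \geq 4$ (both its endpoints would have to be endpoints of the new path, forcing it to be a single edge). However, the proposed fix --- a ``slightly larger switch'' --- cannot be carried out in the framework you set up, and this is a genuine gap rather than a detail to be filled in. Because the switch adds no new vertices (you need $|V(G_k)|=2k(k-1)$ exactly, all sitting on $A\cup B$), the requirements that $P_1$ go from $a_0$ to $b_{n-1}$, that $Q_1$ go from $b_0$ to $a_{n-1}$, that $|V(P_1)|=|V(Q_1)|=n$, and that $V(P_1)\,\dot\cup\,V(Q_1)=V(A)\cup V(B)$ together force every crossing $P_1$ makes from $A$ to $B$ to be of the form $a_p\,b_{p+1}$ and every crossing $Q_1$ makes to be the opposite diagonal $a_{p+1}\,b_p$ at the \emph{same} index $p$. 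Indeed, if $P_1$ crosses at indices $p_1<\dots<p_r$, the $A$-intervals it covers must exactly complement the $A$-intervals $Q_1$ covers, which pins down $Q_1$'s crossing indices to be the same $p_1,\dots,p_r$. So every crossing index produces the four edges $a_pa_{p+1},\,b_pb_{p+1},\,a_pb_{p+1},\,a_{p+1}b_p$, i.e.\ the fatal $4$-cycle. A $6$-cycle or ``bipartite gadget'' built over a few positions either violates $\Delta\leq 3$, fails to give both the straight and swapped partitions, or reproduces the same $4$-cycle locally.

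The paper avoids this obstruction by abandoning the two-parallel-tracks topology altogether. It builds $G_k$ from a single cycle $C$ of length $2k$ (this is the entire ``swap'' region), and hangs $k-2$ long subdivided chords $P(ab)$ of length $2k+1$ off carefully chosen pairs $a,b\in V(C)$. The cycle $C$ is long enough to be covered by two paths $P_1',P_2'$ of length exactly $k$ (the two halves of $C$), so no edge of $C$ is ever stranded; and each chord $P(ab)$, once $C$ is embedded, is a path of length $2k+1$ whose endpoints lie in the already-explored set and whose $2k$ internal vertices are new. The partition property then comes from each $P(ab)$ appearing in exactly one of $P_1\cup Q_1$ and one of $P_2\cup Q_2$ (traversed in opposite directions), with the alternating pattern of $u$- and $v$-chords handling the ``swap''. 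In short, the fix you need is not a bigger switch on two tracks but a longer switching cycle with the bulk of the graph attached as long pendant paths, which is structurally a different construction.
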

\begin{proof}We will construct $G_k$ from a cycle $C$ of length $2k$, by adding paths $P(ab)$ between specific pairs of vertices $a,b \in V(C)$. Let $U = \{u_1,\ldots, u_{k}\}$ and $V = \{v_1,\ldots,v_k\}$ be disjoint sets of vertices, and let $C$ be the cycle on $U \cup V$ obtained by adding the edges $u_1v_2, u_kv_{k-1}$; 
    $u_i u_{i+1}$ and $v_i v_{i+1}$, for every odd $i \in [k-1]$;
    $u_i v_{i+2}$, for every even $i \in [k-2]$;
    and $u_i v_{i-2}$ for every odd $i$ with $3 \leq i \leq k$. 
    For example, when $k=2$, we have the $4$-cycle $C=u_1u_2v_1v_2u_1$, and, when $k = 6$, we have $C = u_1u_2v_4v_3u_5u_6v_5v_6u_4u_3v_1v_2u_1$.

    Setting $S := \{(u_i,u_{i+1}): i \in [k-2], i \text{ even}\} \cup \{(v_i,v_{i+1}): i \in [k-2], i \text{ even}\}$,
    $G_k$ is obtained from $C$ by adding a path $P(ab)$ between $a$ and $b$, using $2k$ new vertices, for each pair $(a,b)\in S$. 
    For each $(a,b)\in S$, let $P(ba)$ be the path $P(ab)$ traversed in the opposite direction, that is, it is a path beginning with vertex $b$ and ending at $a$. Set $v^{\inn}_1:= u_1$, $v^{\inn}_2:= v_1$,  $v^{\out}_1 := u_k$ and $v^{\out}_2 := v_k$. Now define 
    \[\begin{array}{lll}
    P_1 &:=& u_1 P(v_2v_3)P(u_5u_4)\ldots P(v_{k-4}v_{k-3})P(u_{k-1}u_{k-2})v_k,\vspace{.2cm}\\ 
    Q_1&:=& v_1 P(u_3u_2)P(v_4v_5)\ldots P(u_{k-3}u_{k-4})P(v_{k-2}v_{k-1})u_k,\vspace{.2cm}\\
    P_2&:=& u_1 P(u_2u_3)P(u_4u_5)\ldots P(u_{k-2}u_{k-1})u_k, \vspace{.2cm}\\
    Q_2&:=& v_1 P(v_2v_3) P(v_4 v_5) \ldots P(v_{k-2} v_{k-1}) v_k.
    \end{array}\]

    We now check that $G_k$ has the desired properties. Observe that Properties~{\it\ref{it:small}}, {\it\ref{it:maxdeg}} and {\it\ref{it:ends}} follow by construction. 
    To verify {\it\ref{it:decompose}}, let $P_1'$ and $P_2'$ be edge-disjoint paths of length $k$, with the same endpoints, such that $P_1'\cup P_2'=C$, and $z$ is one of the endpoints of $P_1'$. Also, let paths $P_j'$, $j\ge 3$, correspond to the paths $P(ab)$ in an arbitrary order. By construction, the edge set of $\bigcup_{j\ge 1}P_i'$ is equal to $E(G)$, and each path $P_j'$ in the sequence meets the vertices of the previous paths $P'_{j'}$, $j'<j$, only at the endpoints of $P'_{j}$, if they intersect. Hence $G$ is \{z\}-path-constructible. For {\it\ref{it:cover}}, observe that, for each even $i$, $P(u_iu_{i+1})$ appears in $P_2$ and $P(u_{i+1} u_i)$ appears in $P_1 \cup Q_1$.
    Similarly, for each even $i$, we have that $P(v_i v_{i+1})$ appears in $Q_2$ and $P(v_i v_{i+1})$ appears in $P_1 \cup Q_1$.
    As all vertices in $G_k \setminus \{u_1,v_1,u_k,v_k\}$ appear in some $P(ab)$, we clearly have that {\it\ref{it:cover}} holds. 
    Finally, {\it\ref{it:size}} holds as every $P(ab)$ contains $2k+2$ vertices,
    $\frac{k}{2}-1$ of these paths appear in each of $P_1,Q_1,P_2,Q_2$ and there are $4$ `end' vertices $u_1,v_1,u_k,v_k$.\end{proof}

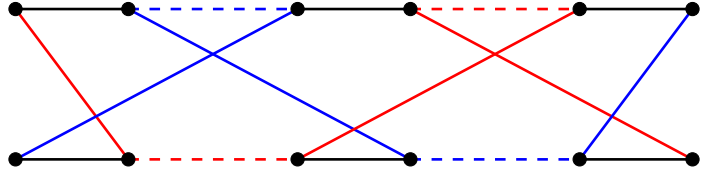
\begin{figure}[!ht]
\begin{center}
\begin{tikzpicture}
\draw [line width=1pt, color=red] (0,6)-- (1.5,4);
\draw [line width=1pt, color=blue] (1.5,6)-- (5.25,4);
\draw [line width=1pt, color=red] (5.25,6)-- (9,4);
\draw [line width=1pt, color=blue] (7.5,4)-- (9,6);
\draw [line width=1pt, color=blue] (3.75,6)-- (0,4);
\draw [line width=1pt, color=red] (7.5,6)-- (3.75,4);
\draw [line width=1pt] (0,6)-- (1.5,6);
\draw [line width=1pt] (0,4)-- (1.5,4);
\draw [line width=1pt] (3.75,4)-- (5.25,4);
\draw [line width=1pt] (3.75,6)-- (5.25,6);
\draw [line width=1pt] (7.5,6)-- (9,6);
\draw [line width=1pt] (7.5,4)-- (9,4);
\draw [line width=1pt,dash pattern=on 4pt off 4pt, color=blue] (1.5,6)-- (3.75,6);
\draw [line width=1pt,dash pattern=on 4pt off 4pt, color=red] (5.25,6)-- (7.5,6);
\draw [line width=1pt,dash pattern=on 4pt off 4pt, color=red] (1.5,4)-- (3.75,4);
\draw [line width=1pt,dash pattern=on 4pt off 4pt, color=blue] (5.25,4)-- (7.5,4);
\begin{scriptsize}
\draw [fill=black] (0,6) circle (2.5pt);
\draw [fill=black] (1.5,4) circle (2.5pt);
\draw [fill=black] (1.5,6) circle (2.5pt);
\draw [fill=black] (0,4) circle (2.5pt);
\draw [fill=black] (3.75,6) circle (2.5pt);
\draw [fill=black] (5.25,6) circle (2.5pt);
\draw [fill=black] (7.5,6) circle (2.5pt);
\draw [fill=black] (9,6) circle (2.5pt);
\draw [fill=black] (7.5,4) circle (2.5pt);
\draw [fill=black] (9,4) circle (2.5pt);
\draw [fill=black] (5.25,4) circle (2.5pt);
\draw [fill=black] (3.75,4) circle (2.5pt);
\end{scriptsize}
\end{tikzpicture}
\caption{The graph $G_6$, as given by Lemma~\ref{lemma:gadgets_exist_more_abstract}. The path in red is $P_1$ and the path in blue is $Q_1$. The dotted lines correspond to the added $P(ab)$ paths and the full lines form a copy of $C_{12}$. The top vertices represent $U$ and the bottom vertices represent $V$.}\label{fig:gadget}
\end{center}
\end{figure}
Using an optimal sorting network as a template, we will `glue together' copies of the graph $G_k$ from Lemma~\ref{lemma:gadgets_exist_more_abstract} to obtain the following result. 
\begin{proposition}\label{prop:sorting_network_template}
   There exists a constant $C_{\ref{prop:sorting_network_template}}$ such that for every $n$ and $k$ and every $\ell\geq \lfloor C_{\ref{prop:sorting_network_template}}k^2\log n\rfloor$, there exists a graph $G$, with $\Delta(G) = 4$, which contains disjoint subsets $A,B\subseteq V(G)$ with $|A|=|B|=n$, such that the following properties hold. 
\begin{enumerate}[label=\upshape(\roman{enumi})]
  \item $G$ is $A\cup B$-path-constructible with paths of length between $k$ and $4k$ (recall Definition~\ref{Def_constructible}).
  \item For any bijection $\phi\colon A \to B$, there exists a $P_\ell$-factor in $G$ such that each path has as endpoints some $a\in A$ and $\phi(a)\in B$.
\end{enumerate}
\end{proposition}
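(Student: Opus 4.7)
The idea is to use an AKS sorting network (Theorem~\ref{thm:AKS_sorting}) on $n$ registers with depth $d\leq C_{\ref{thm:AKS_sorting}}\log n$ as a combinatorial template for $G$, replacing each comparator in the network by a copy of the gadget $G_{k'}$ given by Lemma~\ref{lemma:gadgets_exist_more_abstract}, where $k'\equiv 2\pmod 4$ is the smallest such integer with $k'\geq k$ (so $k'\leq k+3$). The features of $G_{k'}$ that drive the argument are: the ``swap'' configuration $\{P_1,Q_1\}$ and the ``identity'' configuration $\{P_2,Q_2\}$ both partition $V(G_{k'})$ into two paths of equal length $k'(k'-1)-1$ (Properties~(v),~(vi)); the four ports $v^{\inn}_1,v^{\inn}_2,v^{\out}_1,v^{\out}_2$ each have degree $2$; and $G_{k'}$ is $\{v^{\inn}_1\}$-path-constructible (Property~(iii)).

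\textbf{The construction.} Introduce level vertices $w_i^t$ for $i\in[n]$ and $0\leq t\leq d$; after padding each $C_t$ arbitrarily into a perfect matching of the registers (extra comparators cannot ``unsort''; we assume $n$ is even, else append a dummy register and reserve a pass-through identity path for it), insert for each pair $(r_i,r_j)\in C_t$ with $i<j$ a copy of $G_{k'}$ via the identifications $v^{\inn}_1=w_i^{t-1}$, $v^{\inn}_2=w_j^{t-1}$, $v^{\out}_1=w_i^t$, $v^{\out}_2=w_j^t$. Now take $A=\{a_1,\ldots,a_n\}$ and $B=\{b_1,\ldots,b_n\}$, and attach for each $i$ a pendant path of length $t_A$ from $a_i$ to $w_i^0$ and a pendant path of length $t_B$ from $w_i^d$ to $b_i$, choosing integers $t_A,t_B\geq k'$ with $t_A+t_B=\ell-d(k'(k'-1)-1)$; the slack required for this is guaranteed by taking $C_{\ref{prop:sorting_network_template}}$ sufficiently large relative to $C_{\ref{thm:AKS_sorting}}$, using the hypothesis $\ell\geq\lfloor C_{\ref{prop:sorting_network_template}}k^2\log n\rfloor$. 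Since each level vertex is a port in at most two gadgets (each contributing degree $2$) and each pendant-path vertex has degree at most $2$, we obtain $\Delta(G)\leq 4$.

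\textbf{Verification of both properties.} For the $P_\ell$-factor property, given $\phi\colon A\to B$, let $\pi$ satisfy $\phi(a_i)=b_{\pi(i)}$ and initialise the sorting network by $\rho_0(r_i)=\pi(i)$; the network's correctness guarantees that label $j$ ends at register $r_j$. Inside each gadget, realise the comparison's outcome by using $\{P_1,Q_1\}$ if the comparator swaps and $\{P_2,Q_2\}$ otherwise. The trajectory of label $\pi(i)$, concatenated with the two fixed pendants, is then a path of length exactly $\ell$ from $a_i$ to $\phi(a_i)$, and by Property~(v) of Lemma~\ref{lemma:gadgets_exist_more_abstract} these $n$ trajectories partition $V(G)$, producing the desired factor. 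For the path-constructibility, build $G$ from $A\cup B$ in stages: first chop each pendant into sub-paths of length in $[k',2k'-1]$ and add them in order starting from its endpoint in $A\cup B$; then process the gadgets in order of increasing level, applying the $\{v^{\inn}_1\}$-path-constructibility of Property~(iii) to each one (whose starting port $w_i^{t-1}$ lies in the constructed set by the time we reach it). All sub-paths used have length in $[k',2k'+1]\subseteq[k,4k]$, the last inclusion holding once $k$ is at least a fixed constant.

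\textbf{Main obstacle.} The conceptual content is essentially dictated by Theorem~\ref{thm:AKS_sorting} and Lemma~\ref{lemma:gadgets_exist_more_abstract}, so the work is almost entirely bookkeeping: reconciling the arithmetic identity $t_A+t_B=\ell-d(k'(k'-1)-1)$ with the requirement $t_A,t_B\geq k'$, confirming $2k'+1\leq 4k$, and ensuring that the gadgets can be ordered so that each is processed with at least one input port already present. What makes the statement non-trivial (and distinguishes it from analogous routing results such as those in \cite{kuhnlapinskas2014}) is the quantifier structure: because the tails and gadget structure are fixed before the bijection $\phi$ is revealed, the single graph $G$ we construct simultaneously supports a $P_\ell$-factor matching every bijection $\phi\colon A\to B$.
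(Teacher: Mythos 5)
Your construction is essentially the paper's: AKS network as template, comparators replaced by copies of $G_{k'}$ from Lemma~\ref{lemma:gadgets_exist_more_abstract}, trajectories assembled from $\{P_1,Q_1\}$ or $\{P_2,Q_2\}$, and path-constructibility built up level by level. However, there is a genuine gap in the path-constructibility step, caused by a structural choice you made that the paper avoids: you identify the out-ports of level $t-1$ with the in-ports of level $t$, i.e.\ $v^{\out}_1 = w_i^{t}$ of one gadget is the same vertex as $v^{\inn}_1 = w_i^{t}$ of the next. The paper instead keeps separate sets $A^{\inn}_i$ and $A^{\out}_i$ at each level and joins $v^{\out}_{i-1,j}$ to $v^{\inn}_{i,j}$ by a fresh connecting path $Q_{i-1,j}$ of length $k$.

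This distinction is exactly what makes the constructibility ordering work. In $G_k$ the two in-ports $v^{\inn}_1 = u_1$ and $v^{\inn}_2 = v_1$ are at distance $2$ on the cycle $C$ (via $v_2$), and Lemma~\ref{lemma:gadgets_exist_more_abstract}\ref{it:decompose} gives only $\{z\}$-path-constructibility for a single in-port $z$. In that decomposition the \emph{other} in-port is an internal vertex of one of the two length-$k$ arcs of $C$ (it has degree $2$, is not an endpoint of any $P(ab)$, and is not the antipode of $z$). In your construction, once all level-$(t-1)$ gadgets have been processed, \emph{both} $w_i^{t-1}$ and $w_j^{t-1}$ already lie in the constructed set before you touch any level-$t$ gadget. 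Applying the $\{v^{\inn}_1\}$-path-constructibility of that gadget then produces a path whose internal vertices include $w_j^{t-1}$, violating condition~(ii) of Definition~\ref{Def_constructible}. The issue is unavoidable for $t\geq 2$ under a level-by-level order (and any attempt to salvage it by decomposing $G_k$ from $\{u_1,v_1\}$ with all paths of length at least $k$ runs into the fact that the short arc $u_1$--$v_2$--$v_1$ cannot be covered by such a path).

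The paper's connecting paths fix this cleanly: for each level-$i$ gadget on registers $(r_{j_1},r_{j_2})$ one first adds $Q_{i-1,j_1}$ (making $v^{\inn}_{i,j_1}$ the \emph{only} constructed vertex of the gadget), then runs the gadget's $\{v^{\inn}_1\}$-decomposition on fresh vertices, and only afterwards adds $Q_{i-1,j_2}$, which now has both endpoints constructed and all interior vertices new. Your pendants from $A$ can play this role at level~$1$ if you interleave them with the gadgets, but at level~$2$ onward you have no analogous buffer. Inserting a length-$k$ path between consecutive out- and in-ports (which also brings the maximum degree down to $3$) would repair the argument and reproduce the paper's proof. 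The rest of your write-up --- padding to a perfect matching, the length bookkeeping $t_A + t_B = \ell - d\bigl(k'(k'-1)-1\bigr)$, and the factor verification via properties~(v),(vi) --- is fine.
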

\begin{proof}In what follows, whenever we add a structure during our construction,  which will be either a copy of $G_k$ or a path, 
    the only intersection with the previously existing vertices and edges will be that explicitly mentioned. Also, we will construct $G$ for some $\ell=O(k^2\log n)$ as this implies the existence of a constant $C_{4.4}$ and some graph $G'$ with $\ell'$ exactly equal to any value above $\lfloor C_{\ref{prop:sorting_network_template}}k^2\log n\rfloor$ by adding additional paths of length between $k$ and $4k$ to extend the $P_{\ell}$-factor in $G$.
    \par By Theorem~\ref{thm:AKS_sorting}, there exists a parallel sorting network $(R,\mathcal{C})$ with $n$ registers and depth $\ell' \leq C_{\ref{thm:AKS_sorting}}\log n$. Since we are only constructing $G$ for some $\ell=O(k^2\log n)$, we may without loss of generality (and without relabelling) increase the value of $k$ by at most $3$, if necessary, so that $k\equiv 2$ (mod $4$). Let $G_k$ be the graph given by Lemma~\ref{lemma:gadgets_exist_more_abstract}.
    Firstly, we define  $A^{\out}_0:=A$ and $A^{\inn}_{\ell'+1}:=B$, and, for $i\in[\ell']$, we let $A^{\inn}_{i}$ and $A^{\out}_{i}$ be sets of size $n$, all disjoint from each other. For each $1\leq i\leq \ell'+1$, we label the vertices of the sets as $A^{\inn}_{i} =: \{v^{\inn}_{i,1}, \ldots, v^{\inn}_{i,n}\}$ and for each $0\leq i\leq \ell'$ we label $A^{\out}_{i} =: \{v^{\out}_{i,1}, \ldots, v^{\out}_{i,n}\}$. 
    
    We now define the graph $G$. For each $i\in [\ell']$ and each pair of registers $(r_{j_1},r_{j_2})\in C_i$, we add a graph $G_{i, r_{j_1}, r_{j_2}}$ that is a copy of $G_k$, where $v^{\inn}_{i,j_1}$ and $v^{\out}_{i,j_1}$   correspond to $v_1^{\inn}$ and $v_1^{\out}$, respectively, and $v^{\inn}_{i,j_2}$ and $v^{\out}_{i,j_2}$ correspond to $v_2^{\inn}$ and $v_2^{\out}$, respectively.
    Secondly, given $C_i\in \mathcal{C}$, for each register $r_{j}$ that is not included in any pair of registers from $C_i$, we add a $v_{i,j}^{\inn}, v_{i,j}^{\out}$-path $P_{i,j}$ with $k(k-1)$ vertices.  Finally, for each $0\le i\le\ell'$ and $j \in [n]$, we add a $v_{i,j}^{\out}, v_{i+1,j}^{\inn}$-path $Q_{i,j}$ of length $k$. 
    
    \par To check (i), we need to provide a sequence $\mathcal{S}$ of subpaths with the desired properties. We build $\mathcal{S}$ inductively. Order $C_1$ arbitrarily, and for each pair $(r_{j_1},r_{j_2})\in C_1$ (recall, by definition $j_1 < j_2$) add the path $Q_{0, j_1}$ to $\mathcal{S}$, noting that each of these paths has one endpoint in $A$. Afterwards, for each graph $G_{1, r_{j_1}, r_{j_2}}$, consider the sequence guaranteed by Lemma~\ref{lemma:gadgets_exist_more_abstract}\ref{it:decompose} with $z=v_1^\inn$, and append this sequence to $\mathcal{S}$. Now add to $\mathcal{S}$ each path $Q_{0, j'}$ not already added, followed by each path $P_{1,j}$ on $k(k-1)$ vertices, observing that each path when added intersects with $A_0^\out$ or $A_1^\out$ in at least one end vertex. We can proceed similarly for $i\geq 2$ to obtain the desired path sequence $\mathcal{S}$.

    \par Observe that, for each register $r_j$ and each level $i\in [\ell']$, we have two vertices, 
    an in-vertex and an out-vertex, contained in either a copy of $G_k$ or a path $P_{i,j}$, and thus, for all $i\in [\ell']$ and $j\in[n]$ there is a $v_{i,j}^{\inn}, v_{i,j}^{\out}$-path with $k(k-1)$ vertices. Therefore, taking into account the paths $Q_{i,j}$ as well, the number of vertices in $G$ is 
    \[n\cdot(\ell'k(k-1) +(\ell'+1) k)=O(nk^2\log n).\]
   It remains to confirm (ii) holds. Towards that goal, for a given bijection $\phi\colon A\to B$, we define a spanning subgraph $G_{\phi}\subseteq G$ that is a $P_\ell$-factor satisfying~(ii), where $\ell:= (|G|/n)-1=O(k^2\log n)$. We include in $G_\phi$ every edge from $G$ that belongs to one of the paths $Q_{i,j}$, for $0\le i\le \ell'$ and $j\in [n]$,
    and also include all edges belonging to any paths $P_{i',j'}$ that exist,
    for any $i'$ and $j'$. For each $G_{i, r_{j_1}, r_{j_2}}$, recall that $(r_{j_1}, r_{j_2})\in C_i$, 
    and so the values in registers $r_{j_1}$ and $r_{j_2}$ are either swapped or remain the same. 
    Reflecting this,
    if $C_i$ swaps the values in the registers we take the paths of the copy of $G_{i, r_{j_1}, r_{j_2}}$ corresponding to $P_1$ and $Q_1$ in the statement of Lemma~\ref{lemma:gadgets_exist_more_abstract},
    otherwise we take the paths corresponding to $P_2$ and $Q_2$.
\begin{figure}[h!]
    \centering
    \includegraphics[width=0.7\textwidth]{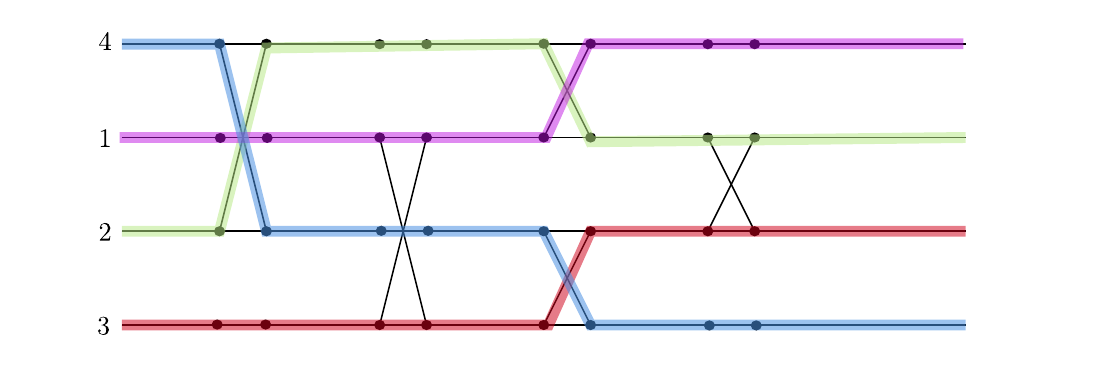}
    \caption{The graph $G$ obtained from the sorting network in Figure~\ref{fig:sortingnetwork} by replacing each pair of registers with a copy of $G_2$, the cycle of length $4$.  For the bijection $\phi$ mapping $1\to 4$, $2\to 1$, $3\to 2$, and $4\to 3$, the subgraph $G_\phi$ corresponds to the coloured paths.} 
    \label{fig:enter-label}
\end{figure}

It is easy to see that $G_\phi$ is a $P_{\ell}$-factor with all endpoints in $A$ and $B$.
    Furthermore, the pair of paths that correspond to swapping values, and the pairs of paths that correspond to maintaining the same values in the registers exactly mimic the operation of the parallel sorting network $(R, \mathcal{C})$ on $\phi$. 
    This guarantees that for each $a \in A$, there exists a path in the $P_{\ell}$-factor with endpoints $a\in A$ and $\phi(a) \in B$, as desired.
\end{proof}

\subsection{Proof of the sorting network lemma}
The remaining task is to show that the graph described in Proposition~\ref{prop:sorting_network_template} can be found inside a sparse expander. 
\begin{proof}[Proof of Lemma~\ref{lemma:find_sorting_network_in_expander}] Let $C_{\ref{lemma:find_sorting_network_in_expander}}=C_{\ref{prop:sorting_network_template}}$ and select $K$ so that $1/K\ll 1/C_{\ref{lemma:find_sorting_network_in_expander}}$.
Let $G'$ be the graph given by Proposition~\ref{prop:sorting_network_template} with $n_{\ref{prop:sorting_network_template}}=|V_1|$, $k=\log n$, and $\ell=\lfloor C_{\ref{lemma:find_sorting_network_in_expander}}\log^3 n \rfloor$ (note $\ell\geq C_{\ref{prop:sorting_network_template}}k^2\log |V_1|$). 
We claim that $G'$ can be embedded into $G$, with $A$ copied to $V_1$ and $B$ copied to $V_2$. Clearly, this would imply the lemma, as $G'$ satisfies property (ii) from Proposition~\ref{prop:sorting_network_template}.

We will embed $G'$ using property (i) from Proposition~\ref{prop:sorting_network_template} and repeatedly applying either of Lemmas~\ref{lemma:extendable:embedding} or \ref{lemma:connecting}, so that at each stage we embed a path $P$ of length $q\in [\log n, 4\log n]$ with all its internal vertices disjoint from all the vertices embedded in previous stages. 
Let $P_1,\ldots, P_t$ be the sequence of subpaths guaranteed by Proposition~\ref{prop:sorting_network_template}.

 Letting $P_0=I(A\cup B)$, at stage $i$ of the embedding, for $0\le i\le t$, we will have a $(D,m)$-extendable subgraph $S_i\subset G$ such that $S_i$ is a copy of $P_0\cup\ldots\cup P_i$, $V_1$ is copied to $A$, and $V_2$ is copied to $B$. Let $S_0=I(V_1\cup V_2)$, with $A$ copied to $V_1$ and $B$ copied to $V_2$, and suppose that we have successfully performed stage $i$ of the embedding, for some $0\le i<t$, and let us demonstrate how we continue the embedding in stage $i+1$. Recall that, by property of the sequence, $P_{i+1}$ is a path which is internally disjoint with $P_0\cup P_1\cup\ldots\cup P_i$ and at least one of the endpoints of the path belongs to $P_0\cup P_1\cup\ldots\cup P_i$. Let $x_{i+1},y_{i+1}$ be the endpoints of $P_{i+1}$. If both $x_{i+1}$ and $y_{i+1}$ belong to $\bigcup_{j\in [i]}V(P_j)$, we will use Lemma~\ref{lemma:connecting} to find a copy $Q_{i+1}$ of $P_{i+1}$ connecting the image of $x_{i+1}$ and $y_{i+1}$. If only one of the endpoints of $P_{i+1}$ is already embedded, say $x_{i+1}$, then we will use Lemma~\ref{lemma:extendable:embedding} to find a copy $Q_{i+1}$ of $P_{i+1}$ starting from the image of $x_{i+1}$. In either case, we extend the embedding by letting $S_{i+1}=S_i+Q_{i+1}$, which is a $(D,m)$-extendable subgraph of $G$ where $A$ is copied to $V_1$ and $B$ is copied to $V_2$.

 We now show that the conditions to apply Lemmas~\ref{lemma:extendable:embedding} and~\ref{lemma:connecting} are satisfied when performing step $i+1$ of the embedding. Indeed, first note that $m \leq n/100D$ and 
 \begin{equation}\label{eq:sorting_network_expander}
     |S_i|+|P_{i+1}|\le |G'|\le |V_1|\cdot 2C_{\ref{lemma:find_sorting_network_in_expander}}\log^3 n\le \frac{n}{10}, \end{equation}
where we used that $|V_1|\le n/K\log^3n$ and $1/K\ll 1/C_{\ref{lemma:find_sorting_network_in_expander}}$. Moreover, note that we are adding a path of length at least
\[\log n\ge \log (2m)\ge \frac{6\log (2m)}{\log(D-1)},\]
 which implies that the length condition in Lemma~\ref{lemma:connecting} is satisfied. Therefore, as $\Delta(S_i)\le 5\le D/2$ and by~\eqref{eq:sorting_network_expander}, we can indeed use either Lemma~\ref{lemma:extendable:embedding} or Lemma~\ref{lemma:connecting} to perform the embedding in stage $i+1$. 

Finally, let us observe that after $t$ stages we have found a $(D,m)$-extendable subgraph $S_{t}\subset G$ which is a copy of $P_1\cup\ldots\cup P_t=G'$, where $A$ is copied to $V_1$ and $B$ is copied to $V_2$.  Thus, letting $S_{res}=S_t$ finishes the proof.
\end{proof}

\section{Proof of Theorem~\ref{thm:mainthm}}\label{sec:mainthm}
The following result allows us to assume an upper bound on $d$ in the proof of Theorem~\ref{thm:mainthm}.
\begin{theorem}[Koml\'os--S\'ark\"ozy--Szemer\'edi~\cite{KSS95}]\label{thm:KSS}\ For every $\Delta\in\mathbb N$ and $\varepsilon>0$,
there exists $n_0\in\mathbb N$ such that the following holds for all $n\ge n_0$.
If $G$ is an $n$-vertex graph with $\delta(G)\ge (1+\varepsilon)\frac{n}{2}$, 
then $G$ contains a copy of every $n$-vertex tree with maximum degree bounded by $\Delta$.\end{theorem}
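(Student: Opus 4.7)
The plan is to apply Szemer\'edi's Regularity Lemma together with Dirac's theorem to reduce the problem to embedding $T$ along a cyclic sequence of regular pairs. Fix $\Delta$ and $\varepsilon>0$, choose $\eta=\eta(\varepsilon,\Delta)>0$ sufficiently small, and apply the Regularity Lemma to obtain an $\eta$-regular equipartition $V(G)=V_0\cup V_1\cup\cdots\cup V_k$ with $|V_0|\le \eta n$ and $|V_i|=m$ for $i\in[k]$. Form the reduced graph $R$ on $[k]$ with $ij\in E(R)$ iff $(V_i,V_j)$ is $\eta$-regular with density at least $\varepsilon/4$. A standard degree-inheritance argument, using $\delta(G)\ge (1+\varepsilon)n/2$, yields $\delta(R)\ge (1+\varepsilon/2)k/2$, so Dirac's theorem gives a Hamilton cycle in $R$, which I relabel as $V_1V_2\cdots V_kV_1$. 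After moving $O(\eta m)$ vertices between consecutive clusters, each such consecutive pair can be made $(\eta,\varepsilon/8)$-super-regular, which prepares the ground for a Blow-up style embedding.

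To embed $T$ along this cycle, root $T$ arbitrarily and perform a depth-first traversal, partitioning the vertices into consecutive blocks $T_1,T_2,\ldots,T_k$ of size approximately $m$. The key feature of DFS on a tree is that at every moment the set of vertices with unexplored descendants forms a root-to-leaf path, so only $O(\Delta)$ edges of $T$ run between consecutive blocks $T_i$ and $T_{i+1}$. I would embed each $T_i$ greedily into $V_i$: to place a child of an already-mapped vertex $v$, pick any unused neighbour of $v$ in $V_i$; if that child will have a descendant in $T_{i+1}$, additionally require its image to have the expected number of neighbours in $V_{i+1}$. Regularity of the pair $(V_i,V_{i+1})$ guarantees that all but $O(\eta m)$ vertices of $V_i$ satisfy both conditions, so the greedy choice succeeds as long as $V_i$ is not nearly exhausted. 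The small exceptional set $V_0$ can be matched into neighbourhoods in $V_1\cup\cdots\cup V_k$ at the outset via Hall's theorem, which applies thanks to the minimum-degree hypothesis.

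The main obstacle is making the embedding exactly spanning, so that each $V_i$ is used up completely and no vertex is left over. To resolve this I would split on the leaf count of $T$. If $T$ has at least $\varepsilon n$ leaves, reserve them from the greedy stage and finish with a Hall-style matching between the parents of the reserved leaves and the unused vertices of $G$; this matching exists because each parent retains linearly many neighbours into the small leftover set by super-regularity. If $T$ has fewer than $\varepsilon n$ leaves, Lemma~\ref{lemma:fewleaves} produces many long bare paths, whose lengths can be stretched or compressed across several consecutive super-regular pairs to absorb small imbalances between $|T_i|$ and $|V_i|$ without disturbing the rest of the embedding, and the bulk of each block is then completed by invoking the Blow-up Lemma on $(V_i,V_{i+1})$. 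This dichotomy between the many-leaves and few-leaves cases is the heart of the argument, and is where the $(1+\varepsilon)n/2$ hypothesis (rather than just $n/2$) is genuinely used, since it provides the slack needed both to find the Hamilton cycle in $R$ and to complete the final matching.
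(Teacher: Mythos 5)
This statement is quoted in the paper as a black-box result of Koml\'os, S\'ark\"ozy and Szemer\'edi; the paper gives no proof of it, so your proposal can only be judged on its own terms. Your overall strategy (regularity lemma, Dirac in the reduced graph, super-regularising a Hamilton cycle of clusters, and a dichotomy on the number of leaves for the spanning completion) is in the right spirit of the original argument, but two of your steps fail as stated.

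First, the claim that ``only $O(\Delta)$ edges of $T$ run between consecutive blocks $T_i$ and $T_{i+1}$'' of a DFS order is false. It is true that the vertices with unexplored descendants form a root-to-current-vertex path at every moment, but that path can have length $\Theta(n)$ and \emph{every} vertex on it can still have an unvisited child. For a caterpillar (a path $u_1\cdots u_{n/2}$ with a pendant leaf at each $u_j$, so $\Delta=3$), a depth-first traversal that descends the spine first leaves $\Theta(n)$ pending leaf-edges at a single cut, so $\Theta(n)$ tree edges cross between two consecutive blocks. Second, and more fundamentally, you propose to embed the block $T_i$ ``greedily into $V_i$'' by choosing for each child an unused neighbour of its parent inside $V_i$. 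The edges of $T_i$ would then have to be realised by edges of $G$ \emph{inside} the cluster $V_i$, about which the regularity lemma says nothing: $G[V_i]$ may well be empty. Any regularity-based embedding of a tree must exploit the bipartiteness of the tree pieces and place the two colour classes of each piece into the two sides of a regular \emph{pair} (or use several pairs per piece); this is what the actual KSS argument does, after partitioning $T$ into many small subtrees and assigning them to edges of a suitable structure in the reduced graph. Finally, the step where bare paths are ``stretched or compressed \ldots to absorb small imbalances'' is precisely the hard, spanning part of the theorem and cannot be waved through; as written, the proposal does not constitute a proof.
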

We now give the proof of our main theorem, which follows closely the sketch given in Section~\ref{proof_overview}. However, there will be an additional complication in the proof, where we will need to address the following technical issue. Once we embed the tree with the internal vertices of the bare paths removed, we have no control over how the graph expands into the image of the endpoints of the bare paths (which is what we need afterwards in order to find the perfect matchings). To overcome this problem, in Step 4 in the proof we will find a collection of short paths starting from the endpoints of the bare paths and ending at two random subsets that we set aside at the beginning of the proof, after which we can sequentially find perfect matchings as described in Section~\ref{proof_overview}.

\begin{proof}[Proof of Theorem~\ref{thm:mainthm}] We first choose constants from `right to left' as follows.
\[1/n\ll 1/C\ll\varepsilon\ll1/K\ll\mu, 1/C_{\ref{thm:manyleaves}}, 1/C_{\ref{lemma:find_sorting_network_in_expander}},  1/C_{\ref{prop:sorting_network_template}},1/K_{\ref{thm:manyleaves}}\ll\alpha,1/\Delta.\]
Let $d\in\mathbb N$ and $\lambda>0$ so that $d/\lambda \ge C\log^3n$. Let $G$ be an $(n,d,\lambda)$-graph and note that,
by Theorem~\ref{thm:KSS}, we may assume $d\le 0.6n$. 
Then, from Lemma~\ref{lemma:secondeig}, we have
\begin{equation}\label{eq:lowerbound:d}
    d\ge \tfrac{n-d}{n-1}\cdot \left(C\log^3 n \right)^2\ge C\log^6n.
\end{equation}

\textbf{Step 0: Find a large collection of long bare paths in $T$.} We may assume that $T$ contains less than $K_{\ref{thm:manyleaves}}\lambda n/d \leq \alpha n/\log^3n$ leaves, as, 
otherwise, we can use Theorem~\ref{thm:manyleaves} to find a copy of $T$ in $G$. 
Then, Lemma~\ref{lemma:fewleaves} implies $T$ contains at least 
\[\frac{n}{1+2K\log^3 n}-\frac{2\alpha n}{\log^3 n} + 2\ge \frac{n}{4K\log ^3 n} \]
vertex-disjoint bare paths of length $2K\log^3n$. 
Therefore, for $k:=\lfloor\frac{\mu n}{K\log^3 n}\rfloor$, we may take a collection $P_1,\ldots, 
P_k\subset T$ of bare paths in $T$, each of length $\ell=K\log^3n$. 
Let $T'$ be the forest obtained by removing the edges of $P_1\cup\ldots \cup P_k$ from $T$. For clarity, $T'$ includes both endpoints of each path $P_i$. 

\textbf{Step 1: Set aside some random sets.} Pick four disjoint random subsets $R_1,R_2,R_3,R_4\subset V(G)$, each of size $k$,
so that with high probability we have that 
\begin{enumerate}[label = \upshape\textbf{A1}]
\item\label{A:1} $\frac{\mu d}{2K\log^3 n} \leq d(v,R_i)\le \frac{2\mu d}{K\log^3n}$ for all $v\in V(G)$ and $i\in[4]$,
\end{enumerate}
and pick another random set $R_0$, disjoint from $R_1,R_2,R_3,R_4$ and of size $\mu n/2$, so that with high probability we have
\begin{enumerate}[label = \upshape\textbf{A2}]
\item\label{A:2} $\mu d\geq d(v,R_0)\geq \mu d/ 4 $ for every $v\in V(G)$.
\end{enumerate}

Note that \ref{A:1} and \ref{A:2} both hold with high probability by Lemma~\ref{lem:concentration} (and a union bound) and \eqref{eq:lowerbound:d}. 
Set $m:=\ell - 10\lfloor\log n\rfloor - C_{\ref{lemma:find_sorting_network_in_expander}}\lfloor\log^3 n\rfloor - 1$ and $R:=\bigcup_{0\le i\le 4}R_i$. 
Observe that $|R| \leq \mu n$ and $m \leq K\log^3 n$. 
Then, since $1/n_0 \ll \varepsilon\ll 1/K$, $0 < \mu \leq \frac{1}{10}$ and $n\ge n_0$, we may use Lemma~\ref{lem:concentration} to find pairwise disjoint subsets $V_1,\ldots, V_m\subset V(G)\setminus R$ such that
\begin{enumerate}[label =\textbf{B\arabic{enumi}}]
    \item\label{B:1} $|V_i| = \frac{\varepsilon n}{\log^3n}$ for all $i\in [m]$, 
    \item\label{B:2} $\frac{\varepsilon d}{2\log^3 n} \leq d(v,V_i)\le \frac{2\varepsilon d}{\log^3n}$ for all $v\in V(G)$ and $i\in[m]$, and
    \item\label{B:3} $d(v,V(G)\setminus (R\cup\bigcup_{i\in[m]}V_i))\ge \frac{2d}{3}$ for all $v\in V(G)$.
\end{enumerate}
\textbf{Step 2: Set aside a sorting network.} Set $t:=\lfloor C_{\ref{lemma:find_sorting_network_in_expander}}\log^3 k\rfloor$, noting $t \leq \ell/100$, and let $R_3=:\{x_1,\ldots, x_k\}$ and $R_4=:\{y_1,\ldots, y_k\}$.
\begin{claim}\label{claim:embed_sorting_network}There is a subset $W\subset V(G)\setminus \left(R\cup \bigcup_{i\in [m]} V_i\right)$ with $|W|=k(t - 1)$ 
such that the following holds. 
\begin{enumerate}[label = \upshape\textbf{C}]
    \item\label{C} For every bijection $\varphi:[k]\to [k]$ there is a collection of vertex-disjoint paths $D_1,\ldots, D_{k}$ such that, 
    for each $i\in [k]$, $D_i$ is an $x_i,y_{\varphi(i)}$-path of length $t$ whose interior vertices are in $W$ (and thus partition $W$).
\end{enumerate}
\end{claim}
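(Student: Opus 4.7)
The plan is to obtain $W$ and the paths $D_1,\ldots,D_k$ by applying Lemma~\ref{lemma:find_sorting_network_in_expander} with $V_1:=R_3$ and $V_2:=R_4$, so that the $S_{res}$ produced by the lemma plays exactly the role of the sorting gadget demanded by the claim. To ensure that the resulting vertex set $W=V(S_{res})\setminus(R_3\cup R_4)$ avoids the previously reserved sets, the application should be performed inside the induced subgraph $G'':=G-(R_0\cup R_1\cup R_2\cup\bigcup_{i\in[m]} V_i)$, which still contains $R_3$ and $R_4$ and has order very close to $n$.

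To invoke Lemma~\ref{lemma:find_sorting_network_in_expander} in $G''$, I would set $m_c:=\lambda n/d$ and pick $D$ to be a fixed constant (say $D=100$), and verify in order: (a) $G''$ is $m_c$-joined, which is inherited from Lemma~\ref{lemma:joined} applied to $G$; (b) $m_c\le |V(G'')|/(100D)$, which follows from $d/\lambda\ge C\log^3 n$ with $C$ large; (c) $|R_3|=|R_4|=k\le |V(G'')|/(K_0\log^3 n)$ for the constant $K_0$ required by the lemma, using the hierarchy $1/K\ll 1/C_{\ref{lemma:find_sorting_network_in_expander}}$ and $\mu\le 1$; and (d) $I(R_3\cup R_4)$ is $(D,m_c)$-extendable in $G''$. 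For (d), I would use Proposition~\ref{prop:weakerextendability}: by \ref{B:3}, every $v\in V(G)$ satisfies $d_G(v,V(G'')\setminus(R_3\cup R_4))=d(v,V(G)\setminus R\setminus\bigcup_i V_i)\ge 2d/3$, so Lemma~\ref{lemma:expansion2} (with $X=V(G)$, $Y=V(G'')\setminus(R_3\cup R_4)$, $z=\log^3 n$, $\mu_{\ref{lemma:expansion2}}=2/3$) gives $|N_G(U)\cap Y|\ge D|U|$ for every $U$ with $|U|\le 2m_c$.

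With the hypotheses verified, Lemma~\ref{lemma:find_sorting_network_in_expander} returns a subgraph $S_{res}\subseteq G''$ such that for every bijection $\phi\colon R_3\to R_4$ there is a $P_t$-factor of $S_{res}$ with each path having endpoints $v\in R_3$ and $\phi(v)\in R_4$ (the length $t$ matches the lemma's output up to routine adjustment since $\log k=(1-o(1))\log n$, so one can feed the lemma the path length $t$ by invoking Proposition~\ref{prop:sorting_network_template} inside its proof with the appropriate parameter). Set $W:=V(S_{res})\setminus (R_3\cup R_4)$; by construction $W\subseteq V(G'')\subseteq V(G)\setminus(R\cup\bigcup_{i\in[m]}V_i)$, and $|W|=k(t+1)-2k=k(t-1)$. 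Given any bijection $\varphi\colon[k]\to[k]$, define $\phi(x_i):=y_{\varphi(i)}$ and take $D_1,\ldots,D_k$ to be the resulting $P_t$-factor; their internal vertices lie in $W$ and partition it, establishing \ref{C}.

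The main obstacle is step~(d): extendability must be proved with the large sets $R_3\cup R_4$ already treated as "embedded", which is precisely why the random sets $R_0,\ldots,R_4$ and $V_1,\ldots,V_m$ were chosen with the quantitative degree control given by \ref{A:1}--\ref{B:3}. Everything else is essentially a careful bookkeeping of constants in the hierarchy.
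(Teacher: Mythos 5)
Your proposal takes the same route as the paper: apply Lemma~\ref{lemma:find_sorting_network_in_expander} inside $\hat{G} := G-(R_0\cup R_1\cup R_2\cup\bigcup_{i\in[m]} V_i)$ with $V_1=R_3$, $V_2=R_4$ and joinedness parameter $\lambda n/d$, and verify $(D,m_c)$-extendability of $I(R_3\cup R_4)$ via \ref{B:3}, Lemma~\ref{lemma:expansion2}, and Proposition~\ref{prop:weakerextendability}. (The paper uses $D=\log n$ rather than your fixed $D=100$; both satisfy the hypotheses $D\geq 100$ and $m_c\leq |\hat G|/100D$, so this is just a stylistic choice, not a gap.)

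There is one genuine, if minor, overreach in step (d). You claim that Lemma~\ref{lemma:expansion2} yields $|N_G(U)\cap Y|\geq D|U|$ for every $U$ with $|U|\leq 2m_c$. It does not: the conclusion of Lemma~\ref{lemma:expansion2} is only for sets of size up to $\lambda n/d = m_c$, whereas Proposition~\ref{prop:weakerextendability} requires the expansion condition for all $U$ up to size $2m_c$. You cannot simply halve $m_c$ to compensate, because you still need $\hat G$ to be $m_c$-joined and Lemma~\ref{lemma:joined} only guarantees $(\lambda n/d)$-joinedness. The missing range $m_c < |U| \leq 2m_c$ must be handled separately by $m_c$-joinedness: for such $U$, every vertex outside $U\cup N_{\hat G}(U)$ forms a set of size less than $m_c$ (else the two sets would have no edge between them), so $|N_{\hat G}(U)\setminus(R_3\cup R_4)|\geq |\hat G| - 3m_c - |R_3\cup R_4| - 1 \geq 2Dm_c \geq D|U|$. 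This is exactly the extra step the paper supplies and your sketch omits; everything else — the choice of host graph, the identification $W=V(S_{res})\setminus(R_3\cup R_4)$, the cardinality count $|W|=k(t-1)$, and the translation of the lemma's conclusion into \ref{C} — matches.
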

\begin{proof} Set $\hat{G}:= G-(R_0\cup R_1\cup R_2 \cup \bigcup_{i\in [m]} V_i)$. Observe that $|\hat{G}|\geq n/2$, that $\lambda n/d \leq |\hat{G}|/(100\log |\hat{G}|)$ since $d/\lambda \geq C\log^3 n$ and $n$ is sufficiently large,
and that $2\lambda n/d\le k \leq |\hat{G}|/K \log^3 |\hat{G}|$ as $1/C\ll \mu, 1/K$. We aim to apply Lemma~\ref{lemma:find_sorting_network_in_expander} with the following parameters: $G_{\ref{lemma:find_sorting_network_in_expander}} = \hat{G}$ , $m_{\ref{lemma:find_sorting_network_in_expander}}=: m'= \lambda n / d$, $D=\log n$, $V_1=R_3$ and $V_2=R_4$. 
Note that $\hat{G}$ is $m'$-joined as $G$ is $m'$-joined, and also $D\geq 100$,
so it suffices to check that $I(R_3\cup R_4)$ is $(D,m')$-extendable in $\hat{G}$.
By \ref{A:1}, \ref{A:2} and \ref{B:3}, every vertex of $\hat{G}$ has at least $d/1000 \gg \mu d/\log^2 n$ neighbours in $\hat{G}- (R_3\cup R_4)$,
so by Lemma~\ref{lemma:expansion2}, with $z_{\ref{lemma:expansion2}} = \log n$, and Proposition~\ref{prop:weakerextendability}, we verify the definition of $(D,m')$-extendable for sets $U$ of size at most $m'$.
For a set $U$ with $m'\le |U|\le 2m'$, $(D,m')$-extendability follows by $m'$-joinedness. Indeed, we have that $|N_{\hat{G}}(U)\setminus (R_3\cup R_4)|\geq |\hat{G}|-1-|R_3\cup R_4| - 3m' \geq n/100 \geq 2Dm'$, as needed.

\par Now, the conclusion of Lemma~\ref{lemma:find_sorting_network_in_expander} corresponds exactly to \ref{C}.
\end{proof}
\textbf{Step 3: Embed $T'$, an almost spanning forest.} Let $G'=G-(W\cup R_3\cup R_4\cup\bigcup_{i\in[m]}V_i)$ and note that 
\begin{equation}\label{eq:G':size}
    |G'|\overset{\ref{B:1}}{\ge} n-|W\cup R_3\cup R_4|-m\cdot\frac{\varepsilon n}{\log^3 n}\ge n-k\cdot (t+1)-K\varepsilon n,
\end{equation}
where we have used that $\ell=K\log^3 n$ and $m\leq \ell$. 

For some $v\in V(G')\setminus (R_1\cup R_2)$ chosen arbitrarily, define $S_v := I(\{v\}\cup R_1\cup R_2)$.

\begin{claim}\label{claim:v_R_1_R_2_is_extendable} $S_v$ is a $(10\Delta,\frac{\lambda n}{d})$-extendable subgraph of $G'$. \end{claim}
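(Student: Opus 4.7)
The plan is to apply Proposition~\ref{prop:weakerextendability} to the ambient graph $G'$ with $D = 10\Delta$ and $m = \lambda n/d$. Since $S_v = I(\{v\} \cup R_1 \cup R_2)$ is edgeless, $\Delta(S_v) = 0 \le 10\Delta$ holds trivially, so it suffices to verify that for every $U \subseteq V(G')$ with $1 \le |U| \le 2\lambda n/d$,
\[
  |N_{G'}(U) \setminus V(S_v)| \ge 10\Delta\, |U|.
\]

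The key observation driving the argument is that the reservoir $R_0$ is essentially contained in $V(G') \setminus V(S_v)$. Indeed, by construction $R_0$ is disjoint from $R_1, R_2, R_3, R_4, W$ and from each $V_i$, so $R_0 \subseteq V(G')$ and $R_0 \cap V(S_v) \subseteq \{v\}$. Combined with~\ref{A:2} and the lower bound $d \ge C\log^6 n$ from~\eqref{eq:lowerbound:d}, every $u \in V(G')$ satisfies
\[
  d_G\bigl(u,\, V(G') \setminus V(S_v)\bigr) \ge d_G(u, R_0) - 1 \ge \frac{\mu d}{4} - 1 \ge \frac{\mu d}{5}.
\]
I expect this to be the main subtlety: the naive attempt to pass through $d_G(u, W)$ via the expander mixing lemma breaks down because $|W|$ can be of order $\Theta(n)$, making the error term $\lambda\sqrt{|W|}$ far larger than $d$. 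Routing the degree estimate through $R_0$ sidesteps this obstacle entirely.

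Having secured the minimum degree condition, I will split the verification into two ranges of $|U|$. For $1 \le |U| \le \lambda n/d$, I apply Lemma~\ref{lemma:expansion2} to $G$ with $X = V(G')$, $Y = V(G') \setminus V(S_v)$, and parameters $\mu/5$, $z = \log^3 n$, $D = 10\Delta$; the required inequality $(\mu/5) \cdot C \cdot \log^3 n > 40\Delta$ follows from the hierarchy. The output $|N_G(U) \cap Y| \ge 10\Delta\,|U|$ coincides with $|N_{G'}(U) \setminus V(S_v)|$ since $U, Y \subseteq V(G')$ and $G'$ is the induced subgraph of $G$ on $V(G')$. For $\lambda n/d < |U| \le 2\lambda n/d$, I will exploit the $(\lambda n/d)$-joinedness of $G$ given by Lemma~\ref{lemma:joined}: the set $B := V(G') \setminus V(S_v) \setminus U \setminus N_{G'}(U)$ is disjoint from $U$ and, since $B, U \subseteq V(G')$, sends no edges to $U$ in $G$, so joinedness forces $|B| < \lambda n/d$. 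Together with $|V(G')| \ge n/2$ and $|V(S_v)| \le 2k+1 = O(n/\log^3 n)$, this yields
\[
  |N_{G'}(U) \setminus V(S_v)| \ge |V(G')| - |V(S_v)| - |U| - \lambda n/d \ge \frac{n}{3},
\]
which comfortably dominates $10\Delta\,|U| \le 20\Delta \lambda n/d = o(n)$ and completes the verification.
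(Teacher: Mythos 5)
Your argument is correct and follows the paper's own route: both proofs verify the hypothesis of Proposition~\ref{prop:weakerextendability} by routing the minimum-degree estimate through $R_0$ via~\ref{A:2}, then invoke Lemma~\ref{lemma:expansion2} for sets of size up to $\lambda n/d$ and $\lambda n/d$-joinedness (Lemma~\ref{lemma:joined}) together with~\eqref{eq:G':size} for sets of size between $\lambda n/d$ and $2\lambda n/d$. The only cosmetic difference is that the paper applies Lemma~\ref{lemma:expansion2} with $Y=R_0$ and $z=1$ (and uses $|N_{G'}(X)\setminus V(S_v)|\ge|N_G(X)\cap R_0|$ directly), while you apply it with $Y=V(G')\setminus V(S_v)$ and $z=\log^3 n$; these are interchangeable since both rest on the degree into $R_0$.
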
 \begin{proof} 
Indeed, for a subset $X\subset V(G')$ with $1\le |X|\le \frac{\lambda n}{d}$, we have
\[|N_{G'}(X)\setminus V(S_v)|\ge |N_{G}(X)\cap R_0|{\ge}10\Delta|X|,\]
where we have used \ref{A:2} and {Lemma}~\ref{lemma:expansion2}, with $z_{\ref{lemma:expansion2}} = 1$ in the second inequality.
If $\frac{\lambda n}{d}\le |X|\le \frac{2\lambda n}{d}$, we use Lemma~\ref{lemma:joined} and \eqref{eq:G':size} to deduce that
\[|N_{G'}(X)\setminus V(S_v)|\ge |G'|-1-|R_1\cup R_2|-\frac{3\lambda n}{d}\overset{\eqref{eq:G':size}}{\ge} \frac{n}{2}-\frac{3\lambda n}{d}\ge 10\Delta |X|. \]
Also, clearly $0 = \Delta(S_v) \leq 10\Delta$.
Then, Proposition~\ref{prop:weakerextendability} implies that $S_v$ is $(10\Delta,\frac{\lambda n}{d})$-extendable in $G'$. \end{proof}

Recall that $T'$ is the forest obtained by removing the edges of the bare paths $P_1,\ldots,P_k$ from $T$. 
We may add dummy edges to $T'$ to think of it as a tree rather than a forest for the following application. Use Lemma~\ref{lemma:extendable:embedding} to find a copy $S$ of $T'$ in $G'-(R_1\cup R_2)$ (the root $t$ to be embedded on $v$ can be chosen arbitrarily) such that $S\cup S_v$ is $(10\Delta,\frac{\lambda n}{d})$-extendable in $G'$.
This can be done as $G'$ is $\frac{\lambda n}{d}$-joined (trivially, since $G$ is $\frac{\lambda n}{d}$-joined), $S_v$ is $(10\Delta,\frac{\lambda n}{d})$-extendable in $G'$ by Claim~\ref{claim:v_R_1_R_2_is_extendable}, $|S_v|\le \frac{\mu n}{3}$, and
\begin{equation}\label{eq:G':size:2}|G'| - (20\Delta + 3)\lambda n/d \overset{\eqref{eq:G':size}}{\ge} n-k\cdot (t+1)-K\varepsilon n - (20\Delta + 3)\lambda n/d \ge |T'|+\frac{\mu n}{2},\end{equation}
where we have used that $|T'| \leq n - \frac{\mu n}{2})$, that $k\cdot (t+1)\leq \frac{\mu n}{1000}$ since $C_{2.3} \ll K$, that $\frac{(20 \Delta + 3)\lambda}{d} \leq \frac{20 \Delta + 3}{C\log^3 n} \ll \mu$ and that $K\varepsilon\ll \mu$. 

\textbf{Step 4: Connect the endpoints of $\bigcup P_i$ to $R_1$ and $R_2$}. Let $R_1=\{a_1,\ldots, a_k\}$ and $R_2=\{b_1,\ldots, b_k\}$.
For each $i\in [k]$, let $u_i$ and $v_i$ be the endpoints of the path $P_i$, recalling that these vertices belong to $V(T')$, hence copies of these vertices are present in the embedding $S$ we produced earlier. We refer to the copies of these vertices as $u_i$ and $v_i$, $i \in [k]$, as well.
\par We now find a collection of vertex-disjoint paths $Q_1,\ldots, Q_k$ such that 
\begin{enumerate}[label =\textbf{D\arabic{enumi}}]
\item $\bigcup_{i\in[k]}Q_i$ is disjoint from $V(S)\cup R_2$,
\item $Q_i$ is a $u_i,a_i$-path of length $t':=5\lfloor\log n\rfloor$  for each $i\in [k]$, and
    \item\label{D} $S\cup I(R_1\cup R_2)\cup Q_1\cup\ldots \cup Q_{i-1}$ is $(10\Delta,\frac{\lambda n}{d})$-extendable  for each $i\in [k]$.
\end{enumerate}
Indeed, setting $ Q_0 := \emptyset$, we have that $S\cup I(\{v\}\cup R_1\cup R_2) \cup Q_0$ is $(10\Delta,\frac{\lambda n}{d})$-extendable in $G'$.
Then, we can find $Q_1,\ldots, Q_k$ using iteratively Lemma~\ref{lemma:connecting} while ensuring~\ref{D}.
This can be done as, by~\ref{D} for $i-1$, $S\cup I(\{v\}\cup R_1\cup R_2)\cup \bigcup_{j\in[i]}Q_{j-1}$ is $(10\Delta,\frac{\lambda n}{d})$-extendable in $G'$, 
has maximum degree at most $\Delta(T)\le\Delta$, we have $t' \geq 2\frac{\log(\lambda n/d)}{\log(10 \Delta + 1)} + 1$ and
\[|G'| - 10D\lambda n/d - (t' - 1) \overset{\eqref{eq:G':size}}{\ge} n-k\cdot (t+1)-K\varepsilon n - 100\Delta\lambda n/d - t' \ge |S|+|R_1\cup R_2|+|\cup_{i\in[k]}Q_i|,\] where the second inequality holds for essentially the same reasons the second inequality of \eqref{eq:G':size:2} holds.
Let $F=S\cup I(\{v\}\cup R_1\cup R_2)\cup \bigcup_{i\in [k]}Q_i$. Then $F$ is $(10\Delta,\frac{\lambda n}{d})$-extendable in $G'$ by \ref{D}.
Similarly as above, find a collection of vertex-disjoint paths $Q_1',\ldots, Q_k'$ such that 
\begin{enumerate}[label =\textbf{E\arabic{enumi}}]
\item $\bigcup_{i\in[k]}Q'_i$ is disjoint from $V(F)$, and
\item $Q'_i$ is a $v_i,b_i$-path of length $t'$  for each $i\in [k]$.
\end{enumerate}
\textbf{Step 5: Connect $R_1$ to $R_3$ and $R_2$ to $R_4$.} Note that up to this point, we have embedded $T'$ and the first and last $t'+1$ vertices of each path $P_1,\ldots, P_k$.
Let $S'$ be the current embedding. 
\begin{claim}There are two permutations $\psi$ and $\psi'$ of $[k]$ and a collection of vertex-disjoint paths $L_1,\ldots, L_k$, $L'_1,\ldots, L'_k$ in $G-W$ such that 
\begin{enumerate}[label =\upshape\textbf{F\arabic{enumi}}]
\item the interior vertices of $L_1,\ldots, L_k, L'_1,\ldots, L'_k$ are disjoint from $V(S')$,
    \item $L_i$ is an $a_i,x_{\psi(i)}$-path of length $m+1$ for each $i\in [k]$, and
    \item $L'_i$ is an $b_i,y_{\psi'(i)}$-path of length $1$ for each $i\in [k]$. 
\end{enumerate}
\end{claim}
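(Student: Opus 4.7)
The plan is to build the $L_i'$ family as a single perfect matching in $G[R_2,R_4]$, and the $L_i$ family as a chain of $m+1$ perfect matchings threading through suitably enlarged copies of the reservoirs $V_1,\dots,V_m$. The tool throughout will be Lemma~\ref{lemma:matching}, whose minimum-degree hypothesis $\delta \geq \varepsilon d /(2\log^3 n)$ is exactly what our assumptions \ref{A:1} and \ref{B:2} are designed to supply.

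First I would handle the $L_i'$'s. Since a path of length $1$ is just an edge, I need a perfect matching between $R_2$ and $R_4$. By \ref{A:1}, every $v\in R_2\cup R_4$ satisfies $d(v,R_2),d(v,R_4)\geq \mu d/(2K\log^3 n)\geq \varepsilon d/(2\log^3 n)$, where the second inequality uses $\varepsilon\ll 1/K\ll \mu$ from the hierarchy. Applying Lemma~\ref{lemma:matching} to $G[R_2,R_4]$ therefore yields a perfect matching, which defines $\psi'$ and the edges $L'_i:=b_iy_{\psi'(i)}$. Condition (F1) is trivial for these since $L'_i$ has no interior vertex.

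Next, for the $L_i$'s, let $L:=V(G)\setminus\bigl(V(S')\cup W\cup R_3\cup R_4\cup\bigcup_{j\in[m]}V_j\bigr)$ be the set of currently unused vertices. A direct count using $|V(S')|=n-k(\ell-2t'-1)$, $|W|=k(t-1)$ and $|V_j|=\varepsilon n/\log^3 n$ shows that $|L|$ equals $m(k-|V_j|)$ up to an $O(1)$ discrepancy that can be absorbed by the slack in the hierarchy, so I can partition $L$ into $L_1,\dots,L_m$ such that $V_j':=V_j\cup L_j$ has size exactly $k$ for each $j$. I then apply Lemma~\ref{lemma:matching} independently to each of the $m+1$ bipartite graphs $G[R_1,V_1']$, $G[V_j',V_{j+1}']$ for $j\in[m-1]$, and $G[V_m',R_3]$. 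The min-degree hypothesis holds in each case: since $V_j\subseteq V_j'$, property \ref{B:2} yields $d_G(v,V_j')\geq \varepsilon d/(2\log^3 n)$ for every vertex $v$, and analogously \ref{A:1} yields the same bound into $R_1$ and $R_3$. Taking the union of these $m+1$ matchings partitions $R_1\cup V_1'\cup\cdots\cup V_m'\cup R_3$ into $k$ vertex-disjoint paths of length $m+1$, each running from some $a_i\in R_1$ through $V_1',\dots,V_m'$ to some $x_{\psi(i)}\in R_3$; reading off the endpoints defines the permutation $\psi$ and the paths $L_i$. Condition (F1) follows because every interior vertex lies in $\bigcup_jV_j'$, which is disjoint from $V(S')$ by construction (the $V_j$'s are chosen in $G'$ and $L$ is defined to exclude $V(S')$) and from $W$ (for the same reason), so all paths lie in $G-W$.

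I expect the main obstacle to be not the matching arguments themselves, which are a routine application of Lemma~\ref{lemma:matching} once the degree bounds are in place, but rather the vertex-count identity $|L|=m(k-|V_j|)$ needed for the final tree embedding to be spanning. This is precisely what forces the specific choice $m=\ell-10\lfloor\log n\rfloor-C_{\ref{lemma:find_sorting_network_in_expander}}\lfloor\log^3 n\rfloor-1$ made in Step~1, and navigating the bookkeeping between this quantity, $t=\lfloor C_{\ref{lemma:find_sorting_network_in_expander}}\log^3 k\rfloor$, and the sizes $|V(S')|$, $|W|$, $|V_j|$ is the only delicate point.
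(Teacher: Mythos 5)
Your proof follows the same route as the paper: first a single perfect matching between $R_2$ and $R_4$ (via \ref{A:1} and Lemma~\ref{lemma:matching}) to produce the edges $L_i'$, then a partition of the leftover vertices into sets $V_j'\supseteq V_j$ of size exactly $k$ followed by a chain of $m+1$ perfect matchings $(R_1,V_1'),(V_1',V_2'),\dots,(V_m',R_3)$ to produce the $L_i$. One small imprecision: the identity $|V(G)\setminus(W\cup V(S''))|=mk$ must hold \emph{exactly} — it is arranged by the choice of $m$, $t$, $t'$ — since any discrepancy, even $O(1)$, cannot be ``absorbed'' in a spanning embedding; otherwise the argument and conclusion match the paper's.
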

\begin{proof}
To find $L_i'$, we simply need to find a perfect matching between $R_4$ and $R_2$, which is guaranteed by Lemma~\ref{lemma:matching}, as $|R_2|=|R_4|=k$ and \ref{A:1} implies that $\delta(G[R_2,R_4])\ge \varepsilon d/2\log^3n$.
Note that all but $mk$ vertices of $T$ are now embedded. Let $S''$ be $S'$ together with $L_1' \ldots, L_k'$, and $T''$ be the subtree of $T$ isomorphic with $S''$. Then, as $|V(G)\setminus (W\cup V(S''))|=mk$, we can partition $V(G)\setminus(W\cup V(T''))=V_1'\cup\ldots\cup V_m'$ so that $V_i\subseteq V_i'$ for each $i\in [m]$ and $|V_1'|=\ldots=|V'_m|=k$. Note that, by~\ref{B:2}, we have $\delta(G[V'_i,V'_{i+1}])\ge \varepsilon d/2\log^3n$ for each $1\le i<m$, and, because of~\ref{A:1}, we also have $\delta(G[R_1,V'_1]),\delta(G[R_3,V_m'])\ge \varepsilon d/2\log^3n$. Thus, invoking Lemma~\ref{lemma:matching} iteratively, we can find perfect matchings between $(R_1,V_1')$, $(V_1',V_2')$, $\ldots$, $(V_{m-1}',V_m')$, and $(V_m', R_3)$.
The unions of these matchings give the desired collection $L_1, \ldots, L_k$ of vertex-disjoint paths.
\end{proof}
\textbf{Step 6: Use the sorting network.} Finally, we can use~\ref{C} to embed the interior $t-1$ vertices of each of the paths $P_1,\ldots, P_k$, and thus complete the embedding of $T$. Indeed, each $u_i$ and $v_i$ are connected via a path to some element $a_{j_i}$ of $R_2$ and $b_{j_i'}$ of $R_4$, respectively, and it suffices to choose the bijection $\phi$ so that it maps $a_{j_i}$ to $b_{j_i'}$ for each $i\in [k]$.
\end{proof}
\section{Concluding remarks}\label{sec:concluding}
\par The results of Han and Yang \cite{han2022spanning} are formulated in the more general context of $(n,d)$-expanders. The statement of Lemma~\ref{lemma:find_sorting_network_in_expander} works also in this level of generality, and so our methods imply universality results for $(n,d)$-expanders as well, but we do not provide the formal details here.
\par In the proof of Theorem~\ref{thm:mainthm}, $d$-regularity is not used in any essential way. In particular, all degrees being in the range $(1\pm \gamma) d$ for some small $\gamma>0$ would also have been sufficient. Hence, we expect that our methods could show that $\mathbb{G}(n,p)$ is $\mathcal{T}(n,\Delta)$-universal whenever $p\geq C_\Delta\log^6 n/n$. However, this is not as strong as the previously mentioned result of Montgomery~\cite{montgomery2019spanning} that $p\geq C_\Delta \log n /n$ is sufficient (see also his earlier work \cite{montgomery2014embedding} showing that $p\geq C_\Delta \log^5 n/n$ is enough). 

\par As another illustration of the use of Lemma~\ref{lemma:find_sorting_network_in_expander}, we sketch how to find cycle factors in pseudorandom graphs (see \cite{han2021finding} and the references therein for more results in this direction). 
\begin{theorem}\label{thm:cycle-factor}There exist positive constants $K$ and $C$ such that the following holds for all sufficiently large $n\in\mathbb N$. Let $k\in\mathbb N$ satisfy $k=K\log^3 n$ and $k\mid n$. Then, any $(n,d,\lambda)$-graph $G$ with $\lambda\le d/C\log^3 n$ contains a $C_k$-factor.
\end{theorem}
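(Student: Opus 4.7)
The plan is to combine Lemma~\ref{lemma:find_sorting_network_in_expander} with the consecutive matching technique used in Section~\ref{sec:mainthm}. Set $t:=n/k$ (the number of cycles to build), $\ell:=\lfloor C_{\ref{lemma:find_sorting_network_in_expander}}\log^3 n\rfloor$ (the path length produced by the sorting-network lemma), and $r:=k-\ell-1$ (the number of intermediate matching layers). We choose $K$ so that $1/K\ll 1/C_{\ref{lemma:find_sorting_network_in_expander}}$, so $r$ is a positive integer comparable to $k$. Each cycle of the $C_k$-factor will be obtained by gluing a path of length $\ell$ inside a fixed sorting-network subgraph $S_{\mathrm{res}}$, running between two anchor sets $V_1,V_2$ of size $t$, with a path of length $k-\ell$ assembled from $r+1$ consecutive perfect matchings through layers $U_1,\ldots,U_r$.

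First I would use Lemma~\ref{lem:concentration} to pre-select disjoint random subsets $V_1,V_2\subseteq V(G)$ of size $t$ and $V'_1,\ldots,V'_r\subseteq V(G)\setminus(V_1\cup V_2)$ of size $\varepsilon n/\log^3 n$ each, so that every vertex of $G$ has at least $\varepsilon d/2\log^3 n$ neighbours in each of $V_1$, $V_2$, and each $V'_i$. Next I would apply Lemma~\ref{lemma:find_sorting_network_in_expander} to the graph $G':=G-\bigcup_{i\in[r]}V'_i$ with the sets $V_1,V_2$. Since $\bigcup V'_i$ occupies only an $O(K\varepsilon)$-fraction of $V(G)$, $G'$ inherits the $(\lambda n/d)$-joinedness of $G$, and $I(V_1\cup V_2)$ is $(D,\lambda n/d)$-extendable in $G'$ for $D=\log n$ by Lemma~\ref{lemma:expansion2} and Proposition~\ref{prop:weakerextendability}, essentially as in Claim~\ref{claim:embed_sorting_network}. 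This yields a subgraph $S_{\mathrm{res}}\subseteq G'$ with $|V(S_{\mathrm{res}})|=t(\ell+1)$ such that for any bijection $\phi\colon V_1\to V_2$ there is a $P_\ell$-factor of $S_{\mathrm{res}}$ realising $\phi$.

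Set $W:=V(G)\setminus V(S_{\mathrm{res}})$, so that $|W|=n-t(\ell+1)=rt$ and $\bigcup_i V'_i\subseteq W$. I would partition $W$ into sets $U_1,\ldots,U_r$, each of size exactly $t$, with $V'_i\subseteq U_i$; the surplus $r(t-\varepsilon n/\log^3 n)$ vertices of $W\setminus\bigcup V'_i$ can be distributed arbitrarily, because the minimum-degree lower bounds from the first step hold for every vertex of $G$. This guarantees $\delta(G[U_i,U_{i+1}])\geq\varepsilon d/2\log^3 n$ for each $1\leq i<r$, and likewise $\delta(G[V_2,U_1]),\delta(G[U_r,V_1])\geq\varepsilon d/2\log^3 n$. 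Iteratively applying Lemma~\ref{lemma:matching} then produces perfect matchings $M_0\subseteq G[V_2,U_1]$, $M_i\subseteq G[U_i,U_{i+1}]$ for $i\in[r-1]$, and $M_r\subseteq G[U_r,V_1]$, whose union forms $t$ vertex-disjoint paths of length $k-\ell$ from $V_2$ to $V_1$. Let $\psi\colon V_2\to V_1$ denote the bijection they induce.

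Finally, setting $\phi:=\psi^{-1}$ and invoking the sorting-network property of $S_{\mathrm{res}}$ produces a $P_\ell$-factor $\{P_v\}_{v\in V_1}$ with $P_v$ joining $v$ to $\phi(v)$; for each $v\in V_1$, concatenating $P_v$ with the matching path from $\phi(v)$ back to $\psi(\phi(v))=v$ gives a cycle of length exactly $\ell+(k-\ell)=k$. These cycles are vertex-disjoint (the two path families meet only inside $V_1\cup V_2$) and together cover $V(S_{\mathrm{res}})\cup\bigcup_i U_i=V(G)$, so they form the desired $C_k$-factor. The main technical point is to verify that $I(V_1\cup V_2)$ is $(D,\lambda n/d)$-extendable in the restricted host $G'$ throughout the iterative embedding inside Lemma~\ref{lemma:find_sorting_network_in_expander}: this reduces to showing that the removal of $\bigcup V'_i$ does not destroy the expansion of small sets, which follows from the pseudo-randomness of $G$ together with $\varepsilon\ll 1/K$, essentially as in Claim~\ref{claim:v_R_1_R_2_is_extendable}.
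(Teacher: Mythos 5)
Your proposal is correct and follows essentially the same route as the paper's sketch: set aside $V_1,V_2$ and intermediate random layers, apply Lemma~\ref{lemma:find_sorting_network_in_expander} to $G$ minus the layers to obtain $S_{\mathrm{res}}$, distribute the leftover vertices into the layers, run consecutive matchings (Lemma~\ref{lemma:matching}) to obtain a $P_{k-\ell}$-factor from $V_2$ to $V_1$, and then choose the bijection $\phi$ to close each path into a $C_k$. The bookkeeping (layer counts, cycle length $\ell+(k-\ell)=k$, $\varepsilon\ll 1/K$) all checks out.
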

\begin{proof}[Sketch of proof of Theorem~\ref{thm:cycle-factor}] Let $K$ be much larger than $C_{\ref{lemma:find_sorting_network_in_expander}}$ and let $C$ be large enough. Let $V_1$ and $V_2$ be two disjoint random subsets of $G$ of size $n/k$. Set $\ell=C_{\ref{lemma:find_sorting_network_in_expander}} \log^3 (n/k)$ and $t=k- \ell_{\ref{lemma:find_sorting_network_in_expander}}+1$. 
Take also disjoint random sets $V_i$, for $3\le i\le t$, each of size $\lfloor n/10k\rfloor $. Applying Lemma~\ref{lemma:find_sorting_network_in_expander}, with parameters $V_1, V_2$ and $G-\bigcup_{i\geq 3} V_i$, find a subgraph $S_{res}$,  disjoint from $\bigcup_{i\geq 3} V_i$, such that $S_{res}$ contains a $P_\ell$-factor connecting $V_1$ with $V_2$ in any given ordering. Let $V_{res}=V(S_{res})$ and, without relabelling, distribute all vertices of $V(G)\setminus (V_{res}\cup \bigcup_{i\geq 1} V_i)$ into  $\bigcup_{i\geq 3} V_i$ so that $V_i=n/k$ for each $i\in [t]$. Then, find perfect matchings (using Lemma~\ref{lemma:matching}) in $G[V_{t},V_{1}]$ and $G[V_i,V_{i+1}]$, for $2\le i<t$,
thus finding a $P_{t-1}$-factor $\mathcal{P}$ in $G\setminus V_{res}$ so that each path in $\mathcal P$ has both endpoints in $V_1$ and $V_2$, respectively. Labelling the vertices of $V_1$ and $V_2$ as $\{x_1,\ldots, x_{n/k}\}$ and  $\{y_1,\ldots, y_{n/k}\}$, respectively, so that each path in $\mathcal{P}$ has as endpoints $x_i$ and $y_i$ for some $i\in [n/k]$, we can simply invoke~Lemma~\ref{lemma:find_sorting_network_in_expander}, with $\phi$ defined as $\phi(x_i)=y_i$ for each $1\le i\le n/k$, to obtain the desired $C_k$-factor.
\end{proof}
\par Let us note that at the end of the above proof, we have a lot of flexibility in the way we choose the bijection $\phi:V_1\to V_2$, which guarantees a wider class of $2$-regular spanning subgraphs than claimed, including Hamilton cycles. Also, up to the exponent of the logarithm, this matches the best known condition on $\lambda$ that forces a Hamilton cycle \cite{glock2023hamilton, KrivelevichHamilton}~and, notably, this seems to be the first method that works in this regime which does not make use of the \textit{Pos\'a rotation-extension} technique. By starting the proof with finding some initial paths to ensure divisibility conditions, the same idea can be used to show that $G$ contains all $2$-factors with sufficiently large girth, but we do not provide details here.
\par Finally, let us remark that the condition $\lambda \leq d/C$ is sufficient to apply Lemma~\ref{lemma:find_sorting_network_in_expander} in the above proof. We use the stronger assumption that $\lambda\le d/C\log^3 n$ only to find a path-factor in the leftover graph with designated start and endpoints. There could be more efficient techniques to perform this latter step, meaning that Lemma~\ref{lemma:find_sorting_network_in_expander} could potentially be used to attack the conjecture of Krivelevich and Sudakov~\cite{krivelevich2006pseudo} that $\lambda \leq d/C$ is a sufficient condition for an $(n,d,\lambda)$-graph to be Hamiltonian.

\section*{Acknowledgements}
We would like to thank Richard Montgomery for organising a workshop at the University of Warwick titled \textit{Spanning subgraphs in graphs and related combinatorial problems} where this work began in July 2023.
\bibliographystyle{amsplain}
\bibliography{spanningtrees}

\end{document}